\begin{document}

\title{De Moivre and Bell polynomials}

\author{
  Cormac ~O'Sullivan\footnote{{\it Date:} Mar 5, 2022.
\newline \indent \ \ \
  {\it 2010 Mathematics Subject Classification:} 05A15, 05A16, 13F25.
  \newline \indent \ \ \
Support for this project was provided by a PSC-CUNY Award, jointly funded by The Professional Staff Congress and The City
\newline \indent \ \ \
University of New York.}
  }

\date{}

\maketitle

\def\s#1#2{\langle \,#1 , #2 \,\rangle}

\def\F{{\frak F}}
\def\C{{\mathbb C}}
\def\R{{\mathbb R}}
\def\Z{{\mathbb Z}}
\def\Q{{\mathbb Q}}
\def\N{{\mathbb N}}
\def\G{{\Gamma}}
\def\GH{{\G \backslash \H}}
\def\g{{\gamma}}
\def\L{{\Lambda}}
\def\ee{{\varepsilon}}
\def\K{{\mathcal K}}
\def\Re{\mathrm{Re}}
\def\Im{\mathrm{Im}}
\def\PSL{\mathrm{PSL}}
\def\SL{\mathrm{SL}}
\def\Vol{\operatorname{Vol}}
\def\lqs{\leqslant}
\def\gqs{\geqslant}
\def\sgn{\operatorname{sgn}}
\def\res{\operatornamewithlimits{Res}}
\def\li{\operatorname{Li_2}}
\def\lip{\operatorname{Li}'_2}
\def\pl{\operatorname{Li}}

\def\dm{{\mathcal A}}
\def\bl{{\mathcal B}}
\def\by{{\mathcal Y}}
\def\hr{{C}}

\def\clp{\operatorname{Cl}'_2}
\def\clpp{\operatorname{Cl}''_2}
\def\farey{\mathscr F}

\newcommand{\stira}[2]{{\genfrac{[}{]}{0pt}{}{#1}{#2}}}
\newcommand{\stirb}[2]{{\genfrac{\{}{\}}{0pt}{}{#1}{#2}}}
\newcommand{\norm}[1]{\left\lVert #1 \right\rVert}

\newcommand{\e}{\eqref}
\newcommand{\bo}[1]{O\left( #1 \right)}

\newtheorem{theorem}{Theorem}[section]
\newtheorem{lemma}[theorem]{Lemma}
\newtheorem{prop}[theorem]{Proposition}
\newtheorem{conj}[theorem]{Conjecture}
\newtheorem{cor}[theorem]{Corollary}
\newtheorem{assume}[theorem]{Assumptions}
\newtheorem{adef}[theorem]{Definition}
\newtheorem{eg}[theorem]{Example}

\numberwithin{equation}{section}

\let\originalleft\left
\let\originalright\right
\renewcommand{\left}{\mathopen{}\mathclose\bgroup\originalleft}
\renewcommand{\right}{\aftergroup\egroup\originalright}

\bibliographystyle{alpha}

\begin{abstract}
We survey a family of polynomials that are very useful in all kinds  of power series manipulations, and appearing more frequently in the literature. Applications to formal power series, generating functions and asymptotic expansions are described, and we discuss the related work of De Moivre, Arbogast and Bell.
\end{abstract}

\section{Introduction}
Let $a_1 x +a_2 x^2+ a_3 x^3+ \cdots $ be a formal power series without a constant term and with coefficients in $\C$. It could represent a function in some neighborhood of $x=0$, but usually we don't require convergence.  The $k$th power of this series may be expanded into another series
\begin{equation} \label{bell}
    \left( a_1 x +a_2 x^2+ a_3 x^3+ \cdots \right)^k = \sum_{n\in \Z} \dm_{n,k}(a_1, a_2, a_3, \dots) x^n \qquad \quad (k \in \Z_{\gqs 0}),
\end{equation}
and it can be seen that
 the new coefficients $\dm_{n,k}(a_1, a_2, a_3, \dots)$ can only be nonzero if $n\gqs k$, in which case they  depend at most on $a_1, a_2, \dots, a_{m}$ for $m=n-k+1$. The multinomial development
\begin{equation*}
  \left( a_1 x +a_2 x^2+  \cdots +a_m x^m \right)^k
  = \sum_{j_1+ j_2+ \dots + j_m= k}
   \binom{k}{j_1 , j_2 ,  \dots , j_m} (a_1x)^{j_1} (a_2 x^2)^{j_2}  \cdots (a_m x^m)^{j_m}
\end{equation*}
shows that
\begin{equation} \label{bell2}
  \dm_{n,k}(a_1, a_2, a_3, \dots) = \sum_{\substack{1j_1+2 j_2+ \dots +mj_m= n \\ j_1+ j_2+ \dots +j_m= k}}
 \binom{k}{j_1 , j_2 ,  \dots , j_m} a_1^{j_1} a_2^{j_2}  \cdots a_m^{j_m}
\end{equation}
where the sum   is over all possible $j_1$, $j_2$,  \dots , $j_m \in \Z_{\gqs 0}$ and $0^0=1$ throughout. 

Thus $\dm_{n,k}$ is a simply described polynomial in $a_1,a_2, \dots, a_{n-k+1}$.   In the literature they are designated  `partial ordinary Bell polynomials', and related to the better-known partial Bell polynomials $\bl_{n,k}$. We will make the case  that in fact the $\dm_{n,k}$ polynomials are the more natural and useful version. Prior to Bell, Arbogast \cite{Arb} was already working with them in 1800. However, their earliest appearance seems to be in a 1697 paper  of De Moivre \cite{dem}. There he says he was curious to see if he could generalize to \e{bell} his friend Mr. Newton's work with binomials. His solution is equivalent to \e{bell2}, though uses an interesting recurrence to describe which products $a_1^{j_1} a_2^{j_2}  \cdots a_m^{j_m}$ appear. This is discussed in section \ref{gfn}. See also \cite[Sect. 5.1]{ScIv} for historical context. We propose a new, more succinct name for these fundamental objects:

\begin{adef} \label{dbf}
{\rm For  $n$,  $k \in \Z$ with $k\gqs 0$, define the {\em De Moivre polynomial} $\dm_{n,k}(a_1, a_2, \dots)$ by \e{bell}. If $n<k$ it is $0$. If $n\gqs k$ it is given by  \e{bell2}, making  a polynomial in $a_1, a_2, \dots, a_{n-k+1}$ of homogeneous degree $k$ with positive integer coefficients and  number of terms equalling the number of partitions of $n$ with $k$ parts; see \e{pnk}.}
\end{adef}



In this article we aim to provide a convenient reference for these polynomials, while also  describing some new results about them. 
The author has found them to be of great use in giving explicit forms for asymptotic expansions, and we will see examples of this in section \ref{laps}. More generally, as shown in section \ref{manx},  composing,  inverting and taking powers of generating functions and power series becomes easy with the help of the De Moivre polynomials, giving clear expressions for the new coefficients. We find in section \ref{gfn} that some familiar generating functions take on a new look with this treatment.  Many applications are discussed; we may also mention \cite[Chap. 1]{Pit}, \cite[Chap. 11]{chacha}, for example, for further uses of De Moivre{\textbackslash}Bell polynomials in  probability and statistics and for  relations among symmetric polynomials.


Before reviewing  the basic properties of $\dm_{n,k}$ in the next section, we complete this introduction by including more of the history of these ideas. Replacing the ordinary series in \e{bell} with an exponential series, (in other words a Taylor series), defines the partial Bell polynomials:
\begin{equation} \label{bellb}
    \frac{1}{k!}\left( a_1\frac{x}{1!}  +a_2\frac{x^2}{2!}  + a_3\frac{x^3}{3!}  + \cdots \right)^k = \sum_{n=0}^{\infty} \bl_{n,k}(a_1, a_2, a_3, \dots)\frac{x^n}{n!} \qquad \quad (k \in \Z_{\gqs 0}).
\end{equation}
(We are following Comtet's terminology from \cite[Chap. 3]{Comtet}.)
Hence we have the relation
\begin{equation} \label{ab}
  \bl_{n,k}(a_1,a_2,a_3, \dots)  = \frac{n!}{k!}\dm_{n,k}(\frac{a_1}{1!},\frac{a_2}{2!},\frac{a_3}{3!}, \dots).
\end{equation}
The complete Bell polynomials are defined as
\begin{equation*}
  \by_{n}(a_1,a_2,a_3, \dots)  := \sum_{k=0}^n \bl_{n,k}(a_1,a_2,a_3, \dots).
\end{equation*}
There is also a related Bell polynomial in one variable:
\begin{equation*}
  \bl_n(x):= n! [t^n] e^{x(e^t-1)} = \sum_{k=0}^n \bl_{n,k}(1,1,1, \dots) x^k
\end{equation*}
where $[t^n]$ indicates the coefficient of $t^n$ in a series.

Bell  introduced the complete polynomials $\by_{n}$ in \cite{Bell34} as a wide generalization of the Hermite and Appell polynomials. They are also related to the partition polynomials he was previously considering - see \cite[Thm. 6.6]{OSsym}. Riordan studied $\by_{n}$ and $\bl_{n,k}$ in \cite{Ri68}, naming them Bell polynomials. Comtet emphasized that what he termed  the `partial ordinary' version should be used  when dealing with ordinary series  like $a_0+a_1 x+a_2 x^2+\cdots$, (he used the notation $\hat{\bl}_{n,k}$ but we prefer  $\dm_{n,k}$ to clearly differentiate the versions).
However, all the formulas in \cite[Chap. 3]{Comtet} involve  the  polynomials $\bl_{n,k}$ and this might account for their dominance in the literature.

 Research into the origins of Fa\`a di Bruno's formula in \cite[pp. 52, 481 - 483]{Knuprog} and \cite{Cr05,Jo02} uncovered the work of Arbogast  in \cite{Arb} where the formula in fact first appeared.  Arbogast  takes powers of polynomials in his method and gives the formula \e{bell2} in \cite[pp. 43-44]{Arb}.
For example, in a table on p. 29 of \cite{Arb},   the entry
\begin{equation}\label{xx}
   6\beta^5 \zeta +15\beta^4(2 \g \varepsilon+\delta^2)
+60\beta^3 \g^2 \delta+15 \beta^2 \g^4
\end{equation}
associated to $x^{10}$ and $D^6$ is provided. In our notation this is $\dm_{10,6}(\beta,\gamma,\delta,\varepsilon,\zeta,\dots)$,
or more clearly,
\begin{equation}\label{xx2}
  \dm_{10,6}(a_1, a_2, a_3, \dots) = 6a_1^5 a_5 +30a_1^4 a_2 a_4+15a_1^4 a_3^2
+60a_1^3 a_2^2 a_3 +15 a_1^2 a_2^4.
\end{equation}
We explain what he was doing after Theorem \ref{arb}.

Abraham De Moivre (1667-1754), though perhaps best known today for a trigonometric formula,  was an important pioneer  in areas such as probability and statistics, roots of equations,  analytic geometry and  infinite series. Born in France, he moved to England with other Huguenots to escape  persecution, and there  became part of the circle that included Newton, Halley and Stirling. 
The biography \cite{dbio} gives more details about his life, and the papers \cite{ScIv,Cr05,Gel} describe aspects of his mathematics, some of which we will touch on. 

\section{Basic properties of $\dm_\MakeLowercase{n,k}$} \label{bas2}

The results in this section follow  from the generating function \e{bell} as  exercises. Some of these and their partial Bell polynomial analogs  appear in \cite[pp. 188 -- 192]{Ri68} and \cite[Sec. 3.3]{Comtet}.
 The following equalities \e{esb} -- \e{fsb5} are identities in the ring $\Z[a_1,a_2, \dots]$.

For small $k$,  
\begin{align}\label{esb}
  \dm_{n,0}(a_1, a_2,   \dots) & = \delta_{n,0}, \\
  \dm_{n,1}(a_1, a_2,   \dots) & = a_n \qquad \qquad (n\gqs 1), \label{esb2} \\
  \dm_{n,2}(a_1, a_2,   \dots) & = \sum_{j=1}^{n-1} a_j a_{n-j}. \label{esb3}
\end{align}
The identities \e{esb2}, \e{esb3} generalize to give a symmetric formula for $\dm_{n,k}$:
\begin{equation}\label{pobell3}
  \dm_{n,k}(a_1, a_2,  \dots)  = \sum_{n_1+n_2+\dots + n_k = n}
    a_{n_1}a_{n_2} \cdots a_{n_k} \qquad \qquad (k \gqs 1),
\end{equation}
where the sum  in  \e{pobell3} is over all possible $n_1$, $n_2,  \dots, n_k \in \Z_{\gqs 1}$.
The two main recursion relations for $\dm_{n,k}$ in $n$ and $k$ are given by
\begin{align}\label{rec}
  \dm_{n+k,k}(a_1,a_2,a_3, \dots) & =\sum_{j=0}^k \binom{k}{j} a_1^{k-j} \dm_{n,j}(a_2,a_3,\dots), \\
  \dm_{n,k+1}(a_1,a_2,a_3, \dots) & =\sum_{j=k}^{n-1} a_{n-j} \dm_{j,k}(a_1,a_2,a_3, \dots). \label{rec2}
\end{align}
In fact \e{rec2} is the $\ell=1$ case of the following identity: 
\begin{equation}\label{rec3}
  \sum_{j=0}^n \dm_{j,k}(a_1,a_2, \dots) \cdot \dm_{n-j,\ell}(a_1,a_2, \dots) = \dm_{n,k+\ell}(a_1,a_2, \dots).
\end{equation}
Note that \e{rec} is useful to compute $\dm_{n+k,k}$ if $n$ is small, since the terms in the sum can only be nonzero for $j \lqs n$. For example, when $k\gqs 1$
we have
\begin{gather}
  \dm_{k,k}(a_1, a_2,  \dots)  = a_1^k, \label{fsb}\\
  \dm_{k+1,k}(a_1, a_2, \dots)  = k a_1^{k-1} a_2, \label{fsb2}\\
\dm_{k+2,k}(a_1, a_2,  \dots)  = \binom{k}{1} a_1^{k-1} a_3 + \binom{k}{2} a_1^{k-2} a_2^2 , \label{fsb3}\\
\dm_{k+3,k}(a_1, a_2,  \dots)  =  \binom{k}{1} a_1^{k-1} a_4 +
  2\binom{k}{2} a_1^{k-2} a_2 a_3 + \binom{k}{3} a_1^{k-3} a_2^3, \label{fsb4}\\
\dm_{k+4,k}(a_1,a_1, \dots) = \binom{k}{1} a_1^{k-1} a_5 +
  \binom{k}{2} a_1^{k-2}\left( a_3^2 + 2 a_2 a_4\right) + 3\binom{k}{3} a_1^{k-3} a_2^2 a_3  +
  \binom{k}{4} a_1^{k-4} a_2^4. \label{fsb5}
\end{gather}
 De Moivre continued this list as far as $\dm_{k+6,k}$ in \cite[Fig. 5]{dem}.

 For a variable $z$, the general binomial coefficients satisfy $\binom{z}{0}:= 1$  and
for positive integers $k$,
\begin{equation}\label{bin}
  \binom{z}{k}:= \frac{z(z-1) \cdots (z-k+1)}{k!}, \qquad  \binom{-z}{k}=(-1)^k\binom{z+k-1}{k}.
\end{equation}

\begin{lemma} For $n \gqs k \gqs 0$, we have the following identities in $\Q[z]$:
\begin{align}
\dm_{n,k}\left(1,1,1,\dots  \right) &   =\binom{n-1}{n-k} \label{bb1}, \\
\dm_{n,k}\left( \binom{z}{0}, \binom{z}{1}, \binom{z}{2},\dots  \right) & = \binom{k z}{n-k}, \label{aa2}\\
\dm_{n,k}\left( \binom{z}{0}, \binom{z+1}{1}, \binom{z+2}{2},\dots  \right) & = \binom{n+k z -1}{n-k}. \label{aa3}
\end{align}
\end{lemma}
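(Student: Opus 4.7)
The plan is to apply the defining generating function \e{bell} with $k$ fixed, and for each of the three specializations evaluate the inner series $\sum_{i\gqs 1} a_i x^i$ in closed form; the claimed value of $\dm_{n,k}$ is then just the coefficient of $x^n$ in its $k$th power, which we read off via the binomial series \e{bin}.

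For \e{bb1}, setting $a_i=1$ for all $i$ gives $\sum_{i\gqs 1} x^i = x/(1-x)$, whose $k$th power is $x^k(1-x)^{-k}$. Using \e{bin} to expand $(1-x)^{-k}=\sum_{m\gqs 0}\binom{k+m-1}{m}x^m$ and taking $m=n-k$ yields $\binom{n-1}{n-k}$. For \e{aa2}, the choice $a_i = \binom{z}{i-1}$ gives $\sum_{i\gqs 1}\binom{z}{i-1} x^i = x(1+x)^z$, where $(1+x)^z := \sum_{j\gqs 0} \binom{z}{j} x^j$ is interpreted formally in $\Q[z][[x]]$. Its $k$th power is $x^k(1+x)^{kz}$, and the coefficient of $x^n$ is $\binom{kz}{n-k}$. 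For \e{aa3}, the choice $a_i = \binom{z+i-1}{i-1}$ together with the standard identity $\sum_{j\gqs 0}\binom{z+j}{j}x^j=(1-x)^{-(z+1)}$ gives $\sum_{i\gqs 1} a_i x^i = x(1-x)^{-(z+1)}$, whose $k$th power is $x^k(1-x)^{-k(z+1)}$; by \e{bin} the coefficient of $x^n$ equals $\binom{k(z+1)+n-k-1}{n-k}=\binom{kz+n-1}{n-k}$.

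The one place that needs a little care is justifying the formal exponential law $(1+x)^z(1+x)^w=(1+x)^{z+w}$, and its analogue $(1-x)^{-u}(1-x)^{-v}=(1-x)^{-(u+v)}$, needed to raise $x(1+x)^z$ and $x(1-x)^{-(z+1)}$ to the $k$th power. The cleanest route is to verify the identities \e{aa2} and \e{aa3} first for every $z\in \Z_{\gqs 0}$, where $(1+x)^z$ is an ordinary polynomial and the $k$th power is computed by the usual binomial theorem; since both sides of \e{aa2} and of \e{aa3} are elements of $\Q[z]$, the polynomial identity theorem extends them to all $z$. Finally, \e{bb1} is just the $z=0$ specialization of \e{aa3}, so strictly speaking only the two $z$-dependent identities require independent proof. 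I do not foresee any serious obstacle beyond this bookkeeping.
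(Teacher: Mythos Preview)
Your proof is correct and follows essentially the same approach as the paper: specialize the generating series in \e{bell}, identify it as $x(1+x)^z$ or $x(1-x)^{-(z+1)}$, and read off the coefficient of $x^n$ in the $k$th power via the binomial theorem. The only cosmetic difference is that the paper derives \e{aa3} from \e{aa2} by substituting $z\mapsto -z-1$ and invoking the reflection identity in \e{bin}, whereas you prove \e{aa3} directly; both routes are equally short, and your extra care with the formal law $(1+x)^z(1+x)^w=(1+x)^{z+w}$ via the polynomial identity theorem is a welcome clarification the paper leaves implicit.
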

\begin{proof}
The equality \e{aa2} follows from the binomial theorem.  Then \e{aa2} implies \e{aa3},   using the right identity in \e{bin}. Lastly, \e{bb1} is a special case of \e{aa3}.
\end{proof}

 The next relations are often useful:
\begin{align}\label{gsb}
\dm_{n,k}(0, a_1, a_2, a_3, \dots) & =  \dm_{n-k,k}(a_1, a_2, a_3, \dots), \\
  \dm_{n,k}(c a_1, c a_2, c a_3, \dots) & = c^k \dm_{n,k}(a_1, a_2, a_3, \dots), \label{gsb2}\\
  \dm_{n,k}(c a_1, c^2 a_2, c^3 a_3, \dots) & = c^n \dm_{n,k}(a_1, a_2, a_3, \dots). \label{gsb3}
\end{align}
Equalities \e{bb1} and \e{gsb2} easily give:

\begin{lemma} Suppose the $a_j$ are complex numbers  satisfying $|a_j|\lqs Q$. Then for $n \gqs k \gqs 0$,
\begin{equation*}
  \left|\dm_{n,k}(a_1,a_2, \dots)\right| \lqs \binom{n-1}{n-k}Q^k \lqs 2^{n}Q^k.
\end{equation*}
\end{lemma}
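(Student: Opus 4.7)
The plan is short because the paper already flagged which identities to use: \e{bb1} (the value of $\dm_{n,k}$ at all ones) and \e{gsb2} (homogeneity in the scaling $a_j \mapsto c a_j$). The key observation is that $\dm_{n,k}$ has nonnegative integer coefficients as a polynomial in the $a_j$, so it is monotone in each argument when the arguments are nonnegative reals.

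First I would apply the triangle inequality directly to the explicit multinomial expansion \e{bell2}. Since every coefficient $\binom{k}{j_1,\ldots,j_m}$ is nonnegative, this yields
\begin{equation*}
  |\dm_{n,k}(a_1,a_2,\dots)| \;\lqs\; \dm_{n,k}(|a_1|,|a_2|,\dots).
\end{equation*}
Next, using $|a_j|\lqs Q$ together with the nonnegativity of the coefficients gives
\begin{equation*}
  \dm_{n,k}(|a_1|,|a_2|,\dots) \;\lqs\; \dm_{n,k}(Q,Q,Q,\dots).
\end{equation*}
Now \e{gsb2} with $c=Q$ and the $a_j$ all equal to $1$ collapses the right side to $Q^k \dm_{n,k}(1,1,1,\dots)$, and \e{bb1} identifies the latter with $\binom{n-1}{n-k}$. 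This gives the first inequality of the lemma. The second inequality is the trivial bound $\binom{n-1}{n-k}\lqs \sum_{j=0}^{n-1}\binom{n-1}{j}=2^{n-1}\lqs 2^n$.

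There is no real obstacle; the only step worth spelling out in the write-up is the monotonicity point, since it is what lets us replace $|a_j|$ by the uniform bound $Q$ inside $\dm_{n,k}$. Everything else is a direct invocation of results already established in the excerpt.
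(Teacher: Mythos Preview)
Your argument is correct and is exactly the approach the paper intends: it uses \e{gsb2} to factor out $Q^k$ and \e{bb1} to evaluate $\dm_{n,k}(1,1,\dots)$, with the triangle inequality and nonnegativity of the multinomial coefficients justifying the passage from $a_j$ to $|a_j|$ to $Q$. The paper's own proof is just the remark ``Equalities \e{bb1} and \e{gsb2} easily give'' preceding the lemma, so you have simply written out what that remark means.
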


Some further special values of $\dm_{n,k}$ are described next, related to the exponential function and the logarithm.  We have
\begin{equation}\label{aa1}
   \dm_{m+k,k}\left(\frac 1{0!},\frac 1{1!},\frac 1{2!},\dots  \right)  = \frac{k^m}{m!} \qquad (m,k\gqs 0).
\end{equation}
Also, for $n,k\gqs 0$,
\begin{alignat}{3}\label{bakb}
  (e^t-1)^k  & = \sum_{n=0}^\infty \frac{k!}{n!}\stirb{n}{k} t^n & \qquad & \implies \qquad & \dm_{n,k}\left( \frac{1}{1!}, \frac{1}{2!}, \frac{1}{3!},  \dots \right) & = \frac{k!}{n!}\stirb{n}{k},
  \\
  (-\log(1-t))^k & = \sum_{n=0}^\infty \frac{k!}{n!}\stira{n}{k} t^n
 & \qquad & \implies \qquad &\dm_{n,k}\left( \frac{1}{1}, \frac{1}{2}, \frac{1}{3},  \dots \right) & =
  \frac{k!}{n!}\stira{n}{k}. \label{baka}
\end{alignat}
The left identities of \e{bakb} and \e{baka}, (see \cite[(7.49), (7.50)]{Knu}), may be taken as the definitions of the Stirling numbers, as in \cite[p. 51]{Comtet}, and  all their properties developed from this starting point. Alternatively, the Stirling subset numbers $\stirb{n}{k}$ count the number of ways to partition  $n$ elements into $k$ nonempty subsets, and the Stirling cycle numbers $\stira{n}{k}$ count the number of ways to arrange $n$ elements into $k$ cycles. More advanced special values of $\dm_{n,k}$  are shown in \cite[Sect. 9.3]{OSsym}.

We  notice  that the De Moivre polynomials in  \e{bakb} appear with  arguments shifted from the  ones  in \e{aa1}. This type of shifting will also be seen   in  examples in sections \ref{gfn} and  \ref{laps}.
In the simplest case, adding or removing the first coefficient $a_1$ in $\dm_{n,k}$ has a simple effect, by the binomial theorem:
\begin{lemma}
For $k \gqs 0$,
\begin{align}
  \dm_{n,k}(a_2,a_3, \dots) & = \sum_{j=0}^k (-a_1)^{k-j} \binom{k}{j} \dm_{n+j,j}(a_1,a_2, \dots), \label{add}\\
  \dm_{n,k}(a_1,a_1, \dots) & = \sum_{j=0}^k a_1^{k-j} \binom{k}{j} \dm_{n-k,j}(a_2,a_3, \dots). \label{drop}
\end{align}
\end{lemma}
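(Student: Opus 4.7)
The plan is to read both identities off the generating function definition \e{bell}. Set $A(x) := a_1 x + a_2 x^2 + a_3 x^3 + \cdots$, so that by \e{bell} one has $A(x)^k = \sum_n \dm_{n,k}(a_1, a_2, \dots) x^n$. Each of \e{add} and \e{drop} will come from an algebraic decomposition of $A(x)$ that singles out the leading term $a_1 x$, followed by the binomial theorem and matching coefficients of $x^n$.

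For \e{add}, I would observe that the shifted series $a_2 x + a_3 x^2 + \cdots$ equals $x^{-1}\bigl(A(x) - a_1 x\bigr)$. Raising to the $k$th power and applying the binomial theorem,
\begin{equation*}
  \sum_n \dm_{n,k}(a_2, a_3, \dots) x^n = x^{-k}\bigl(A(x) - a_1 x\bigr)^k = \sum_{j=0}^k \binom{k}{j} (-a_1)^{k-j} x^{-j} A(x)^j.
\end{equation*}
Since $[x^n]\, x^{-j} A(x)^j = [x^{n+j}]\, A(x)^j = \dm_{n+j,j}(a_1, a_2, \dots)$, extracting the coefficient of $x^n$ on both sides yields \e{add}.

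For \e{drop}, I would use the complementary decomposition $A(x) = a_1 x + x B(x)$ with $B(x) := a_2 x + a_3 x^2 + \cdots$. The binomial theorem gives
\begin{equation*}
  A(x)^k = \bigl(a_1 x + x B(x)\bigr)^k = x^k \sum_{j=0}^k \binom{k}{j} a_1^{k-j} B(x)^j,
\end{equation*}
and extracting $[x^n]$ from both sides, using $[x^{n-k}] B(x)^j = \dm_{n-k,j}(a_2, a_3, \dots)$, produces \e{drop}.

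There is no serious obstacle: once the generating-function picture is in place, both formulas are just the binomial theorem applied to $A(x)$ with $a_1 x$ either subtracted off or split off additively. The only bookkeeping is to check that the exponent shifts coming from $x^{-j}$ and $x^k$ inside the coefficient extraction interact correctly with the first subscript of $\dm_{\cdot,\cdot}$, and this is handled cleanly by \e{bell}.
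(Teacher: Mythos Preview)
Your proof is correct and matches the paper's approach exactly: the paper states that both identities follow ``by the binomial theorem'' applied to the generating series with $a_1 x$ singled out, which is precisely your decomposition $A(x) = a_1 x + xB(x)$ and its rearrangement. The paper additionally remarks that \e{drop} is the same as the recursion \e{rec} (replace $n$ by $n-k$ there), which you could note as an alternative route to the second identity.
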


(The identity \e{drop} is just \e{rec} again.) Then applying \e{add} and \e{drop} $r$ times leads to the following.

\begin{prop} \label{shift}
For $r, k \gqs 0$,
$\dm_{n,k}(a_{r+1},a_{r+2}, \dots)$ equals
\begin{equation}\label{fortx}
   \sum_{ j_1+ j_2+ \dots +j_{r+1}= k}
 \binom{k}{j_1 , j_2 ,  \dots , j_{r+1}} (-a_1)^{j_1} (-a_2)^{j_2}  \cdots (-a_r)^{j_r}
 \dm_{n+J+r j_{r+1},j_{r+1}}(a_{1},a_{2}, \dots)
\end{equation}
and
$\dm_{n,k}(a_1,a_2, \dots)$ equals
\begin{equation}\label{fort}
   \sum_{ j_1+ j_2+ \dots +j_{r+1}= k}
 \binom{k}{j_1 , j_2 ,  \dots , j_{r+1}} a_1^{j_1} a_2^{j_2}  \cdots a_r^{j_r}
 \dm_{n+J-rk,j_{r+1}}(a_{r+1},a_{r+2}, \dots)
\end{equation}
where $J$ means $(r-1)j_1+(r-2)j_2+ \cdots +1 j_{r-1}$ and the summations are  over all  $j_1$,  \dots , $j_{r+1} \in \Z_{\gqs 0}$ with sum $k$.
\end{prop}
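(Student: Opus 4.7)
My plan is to derive both identities in one stroke directly from the defining generating function \eqref{bell}, by splitting the series at position $r$. I would set $R(x) := \sum_{i \geq 1} a_i x^i$, $P(x) := \sum_{i=1}^{r} a_i x^i$, and $Q(x) := \sum_{j\geq 1} a_{r+j}\,x^j$, so that $R(x) = P(x) + x^r Q(x)$ and equivalently $Q(x) = x^{-r}(R(x) - P(x))$. The coefficients of $R(x)^k$ and $Q(x)^k$ encode $\dm_{n,k}(a_1,a_2,\dots)$ and $\dm_{n,k}(a_{r+1},a_{r+2},\dots)$ respectively, so each of \eqref{fort} and \eqref{fortx} will fall out of expanding one side in terms of the other via a single binomial plus multinomial expansion.

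For \eqref{fort} I would write $R(x)^k = (P + x^r Q)^k = \sum_{m} \binom{k}{m} x^{rm} P(x)^{k-m} Q(x)^m$, then expand $P(x)^{k-m}$ multinomially in $a_1,\dots,a_r$ with exponents $j_1,\dots,j_r$ summing to $k-m$. Renaming $m =: j_{r+1}$ collapses $\binom{k}{m}\binom{k-m}{j_1,\dots,j_r}$ into $\binom{k}{j_1,\dots,j_{r+1}}$, and reading off the coefficient of $x^n$ leaves $\dm_{\ell,j_{r+1}}(a_{r+1},a_{r+2},\dots)$ with $\ell = n - r j_{r+1} - \sum_{i=1}^{r} i j_i$. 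For \eqref{fortx} I would instead expand $Q(x)^k = x^{-rk} \sum_{m} \binom{k}{m} R(x)^m (-P(x))^{k-m}$; the global sign $(-1)^{k-m}$ distributes cleanly as $(-a_i)^{j_i}$ through the multinomial expansion of $(-P)^{k-m}$, $R(x)^m$ contributes the internal factor $\dm_{\cdot,\,j_{r+1}}(a_1,a_2,\dots)$, and extracting the coefficient of $x^n$ (equivalently $x^{n+rk}$ in $(R-P)^k$) gives its first index as $\ell' = n + rk - \sum_{i=1}^{r} i j_i$.

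The single algebraic step linking these raw forms to the stated ones is the identity $\sum_{i=1}^{r} i j_i = r(k - j_{r+1}) - J$, valid under $j_1 + \cdots + j_{r+1} = k$, where $J = (r-1)j_1 + (r-2)j_2 + \cdots + 1 \cdot j_{r-1}$. Substituting it converts $\ell$ into $n + J - rk$ and $\ell'$ into $n + J + r j_{r+1}$, exactly matching \eqref{fort} and \eqref{fortx}. This bookkeeping identity is the only place any care is needed, and it is a one-line rearrangement. The inductive route hinted at in the remark preceding the proposition — applying \eqref{add} and \eqref{drop} to the rightmost argument $r$ times and collapsing the resulting nested multinomials — reaches the same conclusion but requires more intricate index juggling, so I would keep it in reserve as a consistency check at $r = 1, 2$ rather than as the main argument.
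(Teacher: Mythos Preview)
Your proof is correct. The generating-function splitting $R(x)=P(x)+x^rQ(x)$ together with one binomial/multinomial expansion does exactly what is needed, and the key bookkeeping identity $\sum_{i=1}^{r} i j_i = r(k-j_{r+1})-J$ is verified by writing $i=r-(r-i)$ and using $j_1+\cdots+j_r=k-j_{r+1}$.

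This is a genuinely different route from the paper's. The paper proceeds by induction on $r$, applying the one-step identities \eqref{add} and \eqref{drop} repeatedly; the iterated binomial sums then have to be collapsed into a single multinomial. Your direct approach sidesteps that collapse entirely: by expanding $(P+x^rQ)^k$ (respectively $x^{-rk}(R-P)^k$) in one shot, the multinomial coefficient $\binom{k}{j_1,\dots,j_{r+1}}$ appears immediately, and the only remaining work is rewriting the exponent of $x$ in terms of $J$. The inductive method has the pedagogical advantage of explaining why the $r=1$ lemmas are the seed of the general statement, but your argument is shorter, and it makes transparent that both \eqref{fortx} and \eqref{fort} are nothing more than the coefficient-of-$x^n$ readout of a single algebraic identity between formal power series.
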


For example, taking $r=n-k+1$ and setting $a_{r+1}=a_{r+2}= \cdots=0$ in \e{fort} recovers \e{bell2} since the only nonzero terms have $n+J-rk=j_{r+1}=0$ by \e{esb}.

\section{Manipulating power series} \label{manx}
Our power series coefficients $a_j$ may come from a ring $R$.  Though more general cases can be considered, a natural choice  for $R$ is an  integral domain containing $\Z$ (and so of characteristic $0$). Throughout this section we assume $R$ has these properties and hence $R[[x]]$, the ring of formal power series over $R$, is also an integral domain containing $\Z$. See  \cite[Chap. 1]{GJ83}, \cite[Sect. 1.12]{Comtet} or \cite[Sect. A5]{Fl09} for more on  formal power series rings. An important point is that, at each step, a new coefficient must depend only on finitely many others, as if we were working with polynomials; see the notions of summable and admissible in \cite[Chap. 1]{GJ83}. This is why the inner series in Proposition \ref{comp} cannot have a constant term, for example.


\subsection{Series composition}
\begin{prop} \label{comp}
Suppose that $f(x)=a_1 x+a_2 x^2+ \cdots$ and $g(x)=b_0+b_1 x+b_2 x^2+ \cdots$ are two power series in $R[[x]]$. Then
$
  g(f(x))=c_0+c_1 x+c_2 x^2+ \cdots $ is in $R[[x]]$
with
\begin{equation}\label{esto}
 \quad c_n = \sum_{k=0}^n b_k  \cdot \dm_{n,k}(a_1,a_2,\dots).
\end{equation}
\end{prop}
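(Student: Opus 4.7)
The plan is to substitute $f(x)$ directly into $g(x)$ and expand using the generating function definition of the De Moivre polynomials. First I would write
\begin{equation*}
   g(f(x)) = \sum_{k=0}^{\infty} b_k \, f(x)^k = \sum_{k=0}^{\infty} b_k \left( a_1 x + a_2 x^2 + a_3 x^3 + \cdots \right)^k,
\end{equation*}
then invoke the defining equation \eqref{bell} to replace the inner $k$th power by $\sum_{n} \dm_{n,k}(a_1, a_2, \dots)\, x^n$.

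Next I would interchange the order of summation to collect terms by power of $x$:
\begin{equation*}
   g(f(x)) = \sum_{k=0}^{\infty} b_k \sum_{n=k}^{\infty} \dm_{n,k}(a_1, a_2, \dots)\, x^n
   = \sum_{n=0}^{\infty} \left( \sum_{k=0}^{n} b_k \, \dm_{n,k}(a_1, a_2, \dots) \right) x^n,
\end{equation*}
which yields the stated formula for $c_n$. The upper limit $k=n$ in the inner sum and the lower limit $n=k$ in the preceding line both come from the vanishing condition $\dm_{n,k} = 0$ for $n < k$ noted in Definition \ref{dbf}.

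The only real point requiring care is justifying these manipulations in $R[[x]]$, since a priori we have a double infinite sum. The key observation is that $f(x)$ has zero constant term, so $f(x)^k$ is divisible by $x^k$; consequently, for each fixed $n$, only the finitely many terms with $k \leqslant n$ contribute to the coefficient of $x^n$. This is precisely the ``admissibility'' condition for formal substitution alluded to in the paragraph preceding the proposition, and it is what makes the composition well-defined and the rearrangement legal in $R[[x]]$. So the main (and essentially only) obstacle is a one-line appeal to this admissibility; the algebraic content is entirely packaged in \eqref{bell}, and no further use of the explicit formula \eqref{bell2} is needed.
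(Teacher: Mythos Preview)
Your proof is correct and follows exactly the same route as the paper: substitute $f(x)$ into $g$, expand each $f(x)^k$ via the defining relation \eqref{bell}, and switch the order of summation. Your added remark on admissibility (only finitely many $k$ contribute to each coefficient because $f$ has no constant term) is a welcome clarification the paper leaves implicit.
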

\begin{proof}
We have
$$
g(f(x))=\sum_{k=0}^\infty b_k \cdot f(x)^k = \sum_{k=0}^\infty b_k \sum_{n=k}^\infty \dm_{n,k}(a_1,a_2,\dots) x^n,
$$
and switching the order of summation gives \e{esto}.
\end{proof}

Though this result is for formal power series, it applies very widely. A commonly occurring situation has $f(x)$ and $g(x)$ holomorphic in neighborhoods of $0$ with $f(0)=0$. Then $g(f(x)$ is also holomorphic in a neighborhood of $0$ and its Taylor coefficients may be computed using Proposition \ref{comp}.

Three important cases  are as follows. Suppose that the integral domain $R$ contains $\Q$.
Applying Proposition \ref{comp} with $g(x)=(1+x)^\alpha$, $e^{\alpha x}$ and $\log(1+\alpha x)$ gives
\begin{align}\label{pot}
  [x^n] (1+f(x))^\alpha & = \sum_{k=0}^n \binom{\alpha}{k}  \dm_{n,k}(a_1,a_2,\dots),\\
  [x^n] e^{\alpha f(x)} & = \sum_{k=0}^n \frac {\alpha^k}{k!} \dm_{n,k}(a_1,a_2,\dots),\label{cox}\\
  [x^n] \log(1+ \alpha f(x)) & = \sum_{k=1}^n (-1)^{k-1}\frac{\alpha^k}{k} \dm_{n,k}(a_1,a_2,\dots). \label{log}
\end{align}
The polynomials in $a_1, a_2, \dots$ on the right sides of \e{pot}, \e{cox} and \e{log} are essentially the potential, complete exponential and logarithmic polynomials, respectively, of \cite[Sections 3.3, 3.5]{Comtet}.
Recalling \e{bin},  the right sides of \e{pot}, \e{cox} and \e{log} are also degree $n$ polynomials in $\alpha$.
Therefore the series $(1+f(x))^\alpha$, $e^{\alpha f(x)}$ and $\log(1+ \alpha f(x))$ make sense for arbitrary $\alpha$,  giving elements of $R[\alpha][[x]]$.
Further, if $f(x)$ is holomorphic in a neighborhood of $0$, then so are the compositions $g(f(x))$, with \e{pot}, \e{cox}, \e{log} (times $n!$) giving their Taylor coefficients.

For an example  we will need  later, consider
\begin{equation} \label{hot}
   \log\left( 1+\frac{\log(1+x)}{u}\right) = \sum_{n=1}^\infty \ell_n(u) x^n,
\end{equation}
and we would like to know how $\ell_n(u)$ depends on $u$.
Stepping through the   proof of Proposition \ref{comp} shows
\begin{align*}
    \sum_{k=1}^\infty \frac{(-1)^{k+1}}{k \cdot u^k} \log^k(1+x)
  & = \sum_{k=1}^\infty \frac{(-1)^{k+1}}{k \cdot u^k} \sum_{n=k}^\infty \dm_{n,k}( 1,-{\textstyle \frac 12},{\textstyle \frac 13},\dots) x^n \\
& = \sum_{n=1}^\infty x^n  \sum_{k=1}^n  \frac{(-1)^{k+1}}{k \cdot u^k} \dm_{n,k}( 1,-{\textstyle \frac 12},{\textstyle \frac 13},\dots).
\end{align*}
It is already clear that $\ell_n(u)$ is a polynomial of degree $n$ in $1/u$ with no constant term. With \e{gsb2}, \e{gsb3} and \e{baka} we obtain the simplification
\begin{equation}\label{lnu}
  \ell_n(u) = (-1)^{n+1} \sum_{k=1}^n \frac{(k-1)!}{n!} \stira{n}{k} u^{-k}.
\end{equation}

A very common case that follows from \e{pot} should be  highlighted:

\begin{prop} \label{mul}
Let $R$ be an integral domain containing $\Z$ with $a_0 + a_1 x +a_2 x^2+ \cdots$  in $R[[x]]$.
Then for $m\in \Z$,
\begin{equation}\label{reab}
  \left(a_0 + a_1 x +a_2 x^2+  \cdots \right)^{m} = \sum_{n=0}^\infty
\left[\sum_{k=0}^n \binom{m}{k} a_0^{m-k} \dm_{n,k}(a_1, a_2,  \dots) \right] x^n
\end{equation}
is in  $R[[x]]$, (provided $a_0$ is invertible in $R$ if $m<0$), and in particular, the multiplicative inverse of the series  is given by
\begin{equation}\label{recip}
  \left(a_0 + a_1 x +a_2 x^2+  \cdots \right)^{-1} = \sum_{n=0}^\infty
\left[\sum_{k=0}^n (-1)^k a_0^{-k-1} \dm_{n,k}(a_1, a_2,  \dots) \right] x^n.
\end{equation}
\end{prop}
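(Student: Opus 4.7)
The plan is to apply Proposition \ref{comp} with a carefully chosen outer series. Set $f(x) := a_1 x + a_2 x^2 + \cdots \in R[[x]]$, so the given series is $a_0 + f(x)$ and $f$ has no constant term as required by Proposition \ref{comp}. Take $g(y) := (a_0 + y)^m$, so that $g(f(x)) = (a_0 + f(x))^m$ is exactly the object to be expanded.

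First I would verify that $g$ is a legitimate element of $R[[y]]$ with the correct coefficients $b_k = \binom{m}{k} a_0^{m-k}$. For $m \gqs 0$ this is immediate from the binomial theorem: $g$ is the polynomial $\sum_{k=0}^m \binom{m}{k} a_0^{m-k} y^k \in R[y]$. For $m<0$, invertibility of $a_0$ ensures each $\binom{m}{k} a_0^{m-k}$ lies in $R$, so the formal binomial series
\begin{equation*}
 g(y) = \sum_{k=0}^\infty \binom{m}{k} a_0^{m-k} y^k
\end{equation*}
is a legitimate element of $R[[y]]$. To justify calling this series the $m$-th power of $a_0 + y$, I would multiply it by the analogous series for $(a_0 + y)^{-m}$ and invoke the Chu--Vandermonde identity $\sum_{k} \binom{m}{k}\binom{-m}{j-k} = \binom{0}{j} = \delta_{j,0}$, showing the product is $1$ in $R[[y]]$.

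Proposition \ref{comp} then produces \e{reab} directly, where for $m \gqs 0$ I have freely extended the inner sum to $k=n$ using the fact that $\dm_{n,k}=0$ when $k>n$. The reciprocal identity \e{recip} is the specialization $m=-1$ together with $\binom{-1}{k}=(-1)^k$ read off the right identity of \e{bin}. The only substantive step is the formal binomial series identity for negative $m$; once that is in hand, the proposition is an immediate corollary of Proposition \ref{comp}.
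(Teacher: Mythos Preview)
Your argument is correct and aligns with the paper's own justification: the paper simply remarks that Proposition \ref{mul} ``follows from \e{pot}'' (which is itself the specialization of Proposition \ref{comp} to $g(x)=(1+x)^\alpha$), without writing out a separate proof. You go straight to Proposition \ref{comp} with $g(y)=(a_0+y)^m$, which is equivalent and in fact slightly cleaner, since it avoids the intermediate step of factoring out $a_0$ and rescaling via \e{gsb2}; your Chu--Vandermonde check for $m<0$ is the only point needing justification and you handle it correctly.
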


\subsection{Arbogast's formula}

See  \cite{Cr05} and \cite{Jo02} for the early history of the next famous result which is usually named for Fa\`{a} di Bruno. 
It is essentially equivalent to Proposition \ref{comp}.

\begin{theorem}[Arbogast's formula] \label{arb}
For $n$ times differentiable functions $f$ and $g$,
\begin{equation}\label{faa}
 \frac{1}{n!} \frac{d^n}{dx^n} g(f(x)) = \sum_{k=0}^n \frac{g^{(k)}(f(x))}{k!} \cdot \dm_{n,k}\left( \frac{f'(x)}{1!},\frac{f''(x)}{2!}, \dots  \right).
\end{equation}
\end{theorem}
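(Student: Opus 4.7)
The plan is to reduce Arbogast's formula to the formal series composition already handled by Proposition \ref{comp}, via Taylor expansion around a fixed point. Fix $x$ and consider the new variable $t$; the key observation is that, by Taylor's theorem, the coefficient of $t^n$ in the expansion of $g(f(x+t))$ about $t=0$ equals exactly $\frac{1}{n!}\frac{d^n}{dx^n}g(f(x))$. So the whole task is to identify that coefficient in terms of the derivatives of $f$ at $x$ and the derivatives of $g$ at $f(x)$.

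To do this, set $a_j = f^{(j)}(x)/j!$ for $j\gqs 1$ and $b_k = g^{(k)}(f(x))/k!$ for $k\gqs 0$. Then
\begin{equation*}
  f(x+t) - f(x) = a_1 t + a_2 t^2 + a_3 t^3 + \cdots, \qquad g(f(x)+u) = b_0 + b_1 u + b_2 u^2 + \cdots,
\end{equation*}
so $g(f(x+t))$ is exactly the composition $g \circ f$ of Proposition \ref{comp} applied to the inner series $f(x+t)-f(x)$ (which has no constant term) and the outer series $g(f(x)+u)$. Applying that proposition gives
\begin{equation*}
  [t^n]\, g(f(x+t)) = \sum_{k=0}^n b_k\, \dm_{n,k}(a_1, a_2, \dots).
\end{equation*}
Substituting the values of $a_j$ and $b_k$ yields precisely the right-hand side of \e{faa}, and equating with $\frac{1}{n!}\frac{d^n}{dx^n}g(f(x))$ finishes the proof.

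The main subtlety is that the theorem assumes only $n$-fold differentiability rather than analyticity, so one cannot literally invoke convergent power series. The way around this is to interpret the above identities as equalities modulo $t^{n+1}$: each function is replaced by its degree-$n$ Taylor polynomial plus an $o(t^n)$ remainder, and since Proposition \ref{comp} is an identity about coefficients in the formal ring $R[[t]]$, its application at level $n$ depends only on the first $n$ terms of each series. The inner remainder is $o(t^n)$ with $F(t)=f(x+t)-f(x)=O(t)$, so its $k$-th power contributes at order $\gqs k$ plus $o(t^n)$; summing finitely many such terms shows the full composition agrees with the truncated one modulo $o(t^n)$, which is all that is needed to extract the coefficient of $t^n$. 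This bookkeeping with Taylor remainders is the only non-trivial point; once it is in place, the result follows immediately from Proposition \ref{comp}.
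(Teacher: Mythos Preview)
Your route is different from the paper's and in some ways more direct. The paper first argues by induction on $n$ (iterating the chain and product rules) that $\frac{1}{n!}(g\circ f)^{(n)}$ necessarily has the form $\sum_{k} \frac{g^{(k)}(f)}{k!}\, P_{n,k}\bigl(\tfrac{f'}{1!},\tfrac{f''}{2!},\dots\bigr)$ for \emph{some} universal polynomials $P_{n,k}$, and only afterwards identifies $P_{n,k}\equiv\dm_{n,k}$ by specialising to $f(x)=a_1x+a_2x^2+\cdots$, $g(x)=b_0+b_1x+\cdots$ and comparing with Proposition~\ref{comp} at $x=0$. You bypass this two-step scheme: fixing $x$ and Taylor-expanding in $t$ lets Proposition~\ref{comp} deliver the coefficients in one stroke.

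There is one wrinkle in your handling of the differentiability hypothesis, though. Your $o(t^n)$ bookkeeping correctly establishes $g(f(x+t))=Q(t)+o(t^n)$ for an explicit polynomial $Q$, but an expansion of this type does \emph{not} by itself imply that $g\circ f$ is $n$ times differentiable at $x$: the converse of Taylor's theorem with Peano remainder is false in general. So you still need to know separately that $g\circ f$ is $n$ times differentiable---which follows by a short induction on $n$ using the ordinary chain and product rules---and then invoke the uniqueness of the order-$n$ Taylor polynomial to read off $[t^n]Q=\frac{1}{n!}(g\circ f)^{(n)}(x)$. That induction is precisely the step the paper's proof begins with, so its approach gets the differentiability for free; yours trades that for a cleaner, one-shot identification of the coefficients, at the cost of having to import the differentiability from elsewhere.
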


\begin{proof}
Use induction to establish that \e{faa} must be true for some polynomial $P_{n,k}$ instead of $\dm_{n,k}$. To identify $P_{n,k}$
set $f(x)=a_1 x+a_2 x^2+ \cdots$, $g(x)=b_0+b_1 x+b_2 x^2+ \cdots$ and $g(f(x))=c_0+c_1 x+c_2 x^2+ \cdots$.
Then evaluating   at  $x=0$ finds
\begin{equation*}
   c_n = \sum_{k=0}^n b_k \cdot P_{n,k}\left( a_1,a_2, \dots  \right).
\end{equation*}
Comparing this with \e{esto} now shows that  $P_{n,k} \equiv \dm_{n,k}$, since the coefficients $a_i$ and $b_j$  are arbitrary.
\end{proof}

Arbogast had a similar point of view in \cite{Arb}. He considered
 $f(x)=a_0+a_1 x +a_2 x^2+  \cdots $ and gave the formula
\begin{equation} \label{arbo}
  \frac{1}{n!}\left. \frac{d^n}{dx^n} g(f(x))\right|_{x=0} = \sum_{k=0}^n \frac{g^{(k)}(a_0)}{k!} \cdot \dm_{n,k}\left( a_1,a_2, \dots  \right),
\end{equation}
computing \e{arbo} explicitly in tables for $1\lqs n \lqs 10$. For example, see Figure 1 of \cite{Cr05}, reproduced from \cite[p. 29]{Arb}, where
 $\dm_{10,k}(\beta,\gamma,\delta,\varepsilon,\zeta,\nu,\theta,\iota,\kappa,\lambda,\dots)$ is given correctly for $1\lqs k \lqs 10$.

In the notation of Proposition \ref{comp}, a further composition $h(g(f(x)))$  would require $\dm_{n,r}(c_0,c_1, \dots)$. Computing this using \e{esto} looks hopelessly complicated, with De Moivre polynomials inside De Moivre polynomials, but in fact a slight extension of a lemma  of  Charalambides, \cite[Lemma 11.1]{chacha},  gives a simple formula.

\begin{prop} \label{comp2}
With the assumptions of Proposition \ref{comp},
\begin{equation}\label{chch}
\dm_{n+r,r}(c_0, c_1, \dots ) = \sum_{k=0}^n   \dm_{n,k}(a_1,a_2,\dots)\cdot \dm_{k+r,r}(b_0,b_1,\dots).
\end{equation}
\end{prop}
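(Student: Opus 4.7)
The plan is to interpret both sides as coefficients in a common power series and match them. The key observation is a shifted-index trick: since $\dm_{m,r}$ is defined by the expansion of $(a_1 x + a_2 x^2 + \cdots)^r$, one has for any series $h(x) = h_0 + h_1 x + h_2 x^2 + \cdots$ that
\begin{equation*}
(x\,h(x))^r = x^r h(x)^r = \sum_{m \gqs r} \dm_{m,r}(h_0, h_1, h_2, \dots)\, x^m,
\end{equation*}
so $\dm_{k+r,r}(h_0, h_1, \dots) = [x^k]\,h(x)^r$. Applied to $h(x) = g(f(x))$, this gives $\dm_{n+r,r}(c_0, c_1, \dots) = [x^n]\,g(f(x))^r$.

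Next I would expand $g(f(x))^r$ in two stages. Applied with $h = g$, the same identity yields $g(y)^r = \sum_{k \gqs 0} \dm_{k+r,r}(b_0, b_1, \dots)\, y^k$, so substituting $y = f(x)$,
\begin{equation*}
g(f(x))^r = \sum_{k=0}^\infty \dm_{k+r,r}(b_0,b_1,\dots)\, f(x)^k.
\end{equation*}
Now expand $f(x)^k = \sum_{n \gqs k} \dm_{n,k}(a_1, a_2, \dots) x^n$ via the defining relation \e{bell}, and switch the order of summation:
\begin{equation*}
g(f(x))^r = \sum_{n=0}^\infty \left( \sum_{k=0}^n \dm_{n,k}(a_1, a_2, \dots) \cdot \dm_{k+r,r}(b_0, b_1, \dots) \right) x^n.
\end{equation*}
Extracting $[x^n]$ and equating with the expression above for $\dm_{n+r,r}(c_0, c_1, \dots)$ yields \e{chch}.

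The only real obstacle is getting comfortable with the index shift in $\dm_{k+r,r}(b_0, b_1, \dots)$, namely that the ``$0$th'' argument $b_0$ corresponds to the coefficient of $x^1$ inside the bracket being raised to the $r$th power; once that is clarified the two-step expansion is mechanical, and admissibility of all the series operations is automatic since $f$ has no constant term. No induction or recurrence from Section \ref{bas2} is needed — the generating function identity \e{bell} does all the work.
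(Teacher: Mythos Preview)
Your proof is correct and follows essentially the same route as the paper's: both use the shifted-index observation $\dm_{k+r,r}(h_0,h_1,\dots)=[x^k]\,h(x)^r$, expand $g(y)^r$ in $y=f(x)$, substitute the expansion \e{bell} for $f(x)^k$, and switch summations. The only difference is cosmetic ordering---you identify $\dm_{n+r,r}(c_0,c_1,\dots)=[x^n]g(f(x))^r$ at the start, the paper at the end.
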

\begin{proof}
Write $y=f(x)$ so that
\begin{equation*}
  (y \cdot g(y))^r = \sum_{k=r}^\infty \dm_{k,r}(b_0,b_1,\dots) y^k,
\end{equation*}
and hence
\begin{align*}
g(y)^r & =\sum_{k=0}^\infty \dm_{k+r,r}(b_0,b_1,\dots) y^k \\
& = \sum_{k=0}^\infty \dm_{k+r,r}(b_0,b_1,\dots)
 \sum_{n=k}^\infty \dm_{n,k}(a_1,a_2,\dots) x^n\\
 & = \sum_{n=0}^\infty x^n \sum_{k=0}^n   \dm_{n,k}(a_1,a_2,\dots)\cdot \dm_{k+r,r}(b_0,b_1,\dots).
\end{align*}
To finishes the proof, note that
\begin{equation*}
  \dm_{n+r,r}(c_0, c_1, \dots ) = [x^{n+r}](c_0x+c_1 x^2+ \cdots)^r = [x^n]g(y)^r. \qedhere
\end{equation*}
\end{proof}

(Use \e{gsb} to simplify this result if $b_0=c_0=0$.) Now we see that Proposition \ref{comp} is just the $r=1$ case of Proposition \ref{comp2}. Also \e{chch} suggests the following natural extension of Arbogast's formula
 to  $r$th powers of a composition. It is probably contained in the large literature on Arbogast's formula, but we have not  found it.
 
 \begin{theorem} \label{lar}
Let $n$ and $r$ be nonnegative integers. For $n$ times differentiable functions $f$ and $g$,
\begin{equation}\label{faa22}
 \frac{1}{n!} \frac{d^n}{dx^n} g(f(x))^r =\sum_{k=0}^n  \dm_{n,k}\left( \frac{f'(x)}{1!},\frac{f''(x)}{2!}, \dots  \right) \cdot \dm_{k+r,r}\left( \frac{g(f(x))}{0!},\frac{g'(f(x))}{1!}, \dots  \right).
\end{equation}
\end{theorem}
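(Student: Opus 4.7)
My plan is to reduce Theorem \ref{lar} to Arbogast's formula (Theorem \ref{arb}) applied to the single composition $G \circ f$ with $G(y):=g(y)^r$. The only extra ingredient required is to identify the second factor $\dm_{k+r,r}\bigl(g(f(x))/0!,\,g'(f(x))/1!,\dots\bigr)$ on the right-hand side of \eqref{faa22} with the quantity $G^{(k)}(f(x))/k!$ that Arbogast's formula naturally produces.

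To establish this identification, I would read the defining relation \eqref{bell} with the arguments shifted, $a_i\mapsto b_{i-1}$, to obtain
\begin{equation*}
\dm_{k+r,r}(b_0,b_1,b_2,\dots)=[t^k]\left(b_0+b_1 t+b_2 t^2+\cdots\right)^r.
\end{equation*}
Specializing $b_i=g^{(i)}(y)/i!$ at an arbitrary point $y$, the series $\sum_i b_i t^i$ is the Taylor expansion of $t\mapsto g(y+t)$, so its $r$th power is the Taylor expansion of $t\mapsto G(y+t)$, and extracting $[t^k]$ gives
\begin{equation*}
\dm_{k+r,r}\!\left(\frac{g(y)}{0!},\frac{g'(y)}{1!},\frac{g''(y)}{2!},\dots\right)=\frac{G^{(k)}(y)}{k!}.
\end{equation*}
For a reader wary of appealing to Taylor convergence, this may instead be read as a universal polynomial identity in $g(y),g'(y),\dots,g^{(k)}(y)$, verified on polynomial $g$ (where every series is a finite sum) and valid in general by the polynomial universality of the identity.

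With this identity in hand, and noting that the $n$-fold differentiability of $g$ forces the same of $G=g^r$ for every $r\gqs 0$, I would apply Theorem \ref{arb} with $g$ replaced by $G$, producing
\begin{equation*}
\frac{1}{n!}\frac{d^n}{dx^n}g(f(x))^r=\sum_{k=0}^n \frac{G^{(k)}(f(x))}{k!}\cdot\dm_{n,k}\!\left(\frac{f'(x)}{1!},\frac{f''(x)}{2!},\dots\right),
\end{equation*}
and then substitute $y=f(x)$ into the identification from the previous paragraph to convert each factor $G^{(k)}(f(x))/k!$ into the desired $\dm_{k+r,r}$ expression, yielding \eqref{faa22}. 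There is no serious obstacle; the whole argument is essentially the differential-calculus shadow of Proposition \ref{comp2}, and the one real observation is that the shifted arguments $(g(y)/0!,g'(y)/1!,\dots)$ encode the Taylor expansion of $g^r$ centred at $y$.
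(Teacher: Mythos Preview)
Your proof is correct and follows the same overall strategy as the paper: set $G=g^r$, apply Arbogast's formula (Theorem~\ref{arb}) to $G\circ f$, and then identify $G^{(k)}(y)/k!$ with $\dm_{k+r,r}\bigl(g(y)/0!,\,g'(y)/1!,\dots\bigr)$. The only difference lies in that identification step: you read it off directly from the generating-function definition by recognizing $\sum_i g^{(i)}(y)t^i/i!$ as the Taylor expansion of $g(y+t)$ and extracting $[t^k]$ from its $r$th power, whereas the paper applies Arbogast a second time to the composition $h\circ g$ with $h(x)=x^r$ and then collapses the resulting sum via the recurrence \eqref{rec}. Your route is a shade more direct; the paper's has the virtue of staying entirely within the differential calculus and avoiding the polynomial-universality side remark.
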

\begin{proof}
Let $G(x):=g(x)^r$. Applying Proposition \ref{comp} finds
\begin{equation}\label{faax}
 \frac{1}{n!} \frac{d^n}{dx^n} G(f(x)) = \sum_{k=0}^n \dm_{n,k}\left( \frac{f'(x)}{1!},\frac{f''(x)}{2!}, \dots  \right) \cdot \frac{G^{(k)}(f(x))}{k!}.
\end{equation}
With $G(x)=h(g(x))$ for $h(x):=x^r$, another application gives
\begin{align}
  \frac{G^{(k)}(y)}{k!} & = \sum_{j=0}^k \frac{h^{(j)}(g(y))}{j!} \cdot \dm_{k,j}\left( \frac{g'(y)}{1!},\frac{g''(y)}{2!}, \dots  \right) \notag\\
  & = \sum_{j=0}^k \binom{r}{j} g(y)^{r-j} \cdot \dm_{k,j}\left( \frac{g'(y)}{1!},\frac{g''(y)}{2!}, \dots  \right). \label{jean}
\end{align}
Since $\binom{r}{j}=0$ for $j>r$ and $\dm_{k,j}=0$ for $j>k$, we may change the upper limit of summation in \e{jean} from $k$ to $r$. Then by \e{rec},
\begin{equation*}
  \frac{G^{(k)}(y)}{k!} = \dm_{k+r,r}\left( g(y),\frac{g'(y)}{1!}, \dots  \right),
\end{equation*}
and the desired formula follows.
\end{proof}

Lastly we mention that it is natural to  generalize \e{bell} to power series of more than one variable and then consider their compositions. This results in De Moivre polynomials with  elaborate indexing; see the recent paper \cite{Sch} and its contained references for details.

\subsection{Compositional inverses}
We call $g$ a compositional inverse of $f$ if $g(f(x))=x=f(g(x))$.
The usual proof of the following standard result becomes especially clear by employing De Moivre polynomials.

\begin{prop}
The series $f(x)=a_1 x+a_2 x^2+ \cdots$ in $R[[x]]$ has a compositional inverse in $R[[x]]$ if and only if $a_1$ is invertible in $R$. This inverse of $f(x)$ is unique.
\end{prop}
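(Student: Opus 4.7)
The plan is to set $g(x) = b_1 x + b_2 x^2 + \cdots$ (noting $g(0)=0$ is forced by $g(f(0))=0$) and apply Proposition \ref{comp} to translate the equation $g(f(x)) = x$ into a triangular system in the $b_n$. Explicitly, we require
\begin{equation*}
  \sum_{k=1}^{n} b_k \dm_{n,k}(a_1,a_2,\dots) = \delta_{n,1} \qquad (n \gqs 1).
\end{equation*}
Reading off $n=1$ and using $\dm_{1,1}=a_1$ from \e{esb2} forces $b_1 a_1 = 1$, proving the ``only if'' direction (in a commutative ring, having a right inverse is being invertible).

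Conversely, suppose $a_1$ is invertible. I would separate off the top term using $\dm_{n,n}(a_1,a_2,\dots)=a_1^n$ from \e{fsb}, obtaining the recursion
\begin{equation*}
  b_n \, a_1^n = - \sum_{k=1}^{n-1} b_k \, \dm_{n,k}(a_1,a_2,\dots) \qquad (n \gqs 2),
\end{equation*}
with $b_1 := a_1^{-1}$. Since $a_1^n$ is a unit in $R$, each $b_n$ is uniquely determined, so a series $g(x) \in R[[x]]$ satisfying $g(f(x)) = x$ exists and is unique.

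The remaining issue is to show that this left inverse $g$ is also a right inverse, i.e.\ $f(g(x)) = x$. I would argue as follows: because $g(x) = a_1^{-1}x + \cdots$ also has invertible leading coefficient, the same construction produces a series $h \in R[[x]]$ with $h(g(x)) = x$. Composition of power series in $x R[[x]]$ is associative, so
\begin{equation*}
  h(x) = h(g(f(x))) = (h \circ g)(f(x)) = f(x),
\end{equation*}
and therefore $f(g(x)) = h(g(x)) = x$ as desired. Finally, if $g_1$ and $g_2$ both satisfy the two-sided identity, then $g_1 = g_1 \circ f \circ g_2 = g_2$, giving uniqueness of the compositional inverse.

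The only mildly subtle step is the passage from a left inverse to a two-sided one; everything else is a direct appeal to Proposition \ref{comp} combined with \e{esb2} and \e{fsb} to make the system triangular with invertible diagonal.
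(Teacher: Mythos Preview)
Your proof is correct and follows essentially the same route as the paper: set up the triangular system via Proposition~\ref{comp}, solve recursively using $\dm_{n,n}(a_1,\dots)=a_1^n$ from \e{fsb}, and then upgrade the one-sided inverse to a two-sided one by constructing $h$ with $h\circ g=\mathrm{id}$ and using associativity. The paper writes the last step as $f(g(x))=h(g(f(g(x))))=h(g(x))=x$, which is a minor rephrasing of your $h=h\circ(g\circ f)=(h\circ g)\circ f=f$.
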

\begin{proof}
Suppose $a_1$ is invertible. To find the compositional inverse, we assume it has the form $g(x)=b_1 x+b_2 x^2+ \cdots$ and try to solve for the coefficients using \e{esto}.
We want $$1=c_1= b_1  \cdot \dm_{1,1}(a_1,a_2,\dots)=b_1 a_1$$ and  so $b_1=1/a_1 \in R$. If we assume that we have found $b_1$, $b_2, \dots, b_{m-1}$ in $R$ already, then $b_m$ is the solution to
\begin{equation}\label{recfaa}
0 = c_m = \sum_{k=1}^{m-1} b_k  \cdot \dm_{m,k}(a_1,a_2,\dots) + b_m  \cdot \dm_{m,m}(a_1,a_2,\dots).
\end{equation}
With \e{fsb},   $\dm_{m,m}(a_1,a_2,\dots) = a_1^m$. Since this  is again invertible, we see that $b_m \in R$. By induction we obtain  $g$ in $R[[x]]$ so that $g(f(x))=x$. Repeating this argument for  $g$ shows that there exists $h$ in $R[[x]]$ with $h(g(x))=x$. Then
$$
f(g(x))=h(g(f(g(x))))=h(g(x))=x.
$$
Therefore $g$ is a compositional inverse of $f$. Uniqueness is proved in the usual way for inverses.

In the other direction, if $f$ has a compositional inverse $g(x)=b_1 x+b_2 x^2+ \cdots$ in $R[[x]]$ then we must have $a_1 b_1 = 1$ and so $a_1$ is  invertible.
\end{proof}

The relation \e{recfaa} gives a recursive procedure to find the coefficients of the inverse.  There is a more direct way to find them though, using Lagrange inversion.

\begin{theorem}[Lagrange inversion for ${R[[x]]}$] \label{xlinv}
Suppose  $f(x)$ and $g(x)$ are compositional inverses in $R[[x]]$, with neither having a constant term.  Then
\begin{equation}\label{xlagr}
  m[x^m] g(x)^r = r[x^{m-r}](x/f(x))^m \qquad (m, r \in \Z_{\gqs 1}).
\end{equation}
\end{theorem}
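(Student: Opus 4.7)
The plan is to follow the classical formal-residue proof of Lagrange-Bürmann inversion, working in the ring of formal Laurent series $R((x))$; in the De Moivre framework this reads as the explicit identity $m\,\dm_{m,r}(b_1,b_2,\dots) = r\,[x^{m-r}]\bigl(x/f(x)\bigr)^m$, with $[x^m]g(x)^r = \dm_{m,r}(b_1,b_2,\dots)$ by the definition of the De Moivre polynomial. First I would recast the LHS as a formal residue:
$$m[x^m]g(x)^r = [x^{m-1}](g(x)^r)' = r\,[x^{m-1}] g(x)^{r-1} g'(x) = r\,[x^{-1}]\!\left(\frac{g(x)^{r-1} g'(x)}{x^m}\right).$$

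Next I would perform the formal substitution $x = f(y)$. Because $a_1$ is a unit in $R$, every integer power $f(y)^n$ makes sense in $R((y))$ (expand $f(y)^n = a_1^n y^n (1 + a_2 a_1^{-1} y + \cdots)^n$ via \e{reab}), and the key lemma is
$$[x^{-1}] F(x) = [y^{-1}] F(f(y))\,f'(y) \qquad (F \in R((x))).$$
Differentiating the identity $g(f(y))=y$ gives $g'(f(y)) = 1/f'(y)$, so applying this lemma to $F(x) = g(x)^{r-1} g'(x)/x^m$ collapses the two factors of $f'(y)$ and leaves
$$r\,[y^{-1}]\,\frac{y^{r-1}}{f(y)^m} = r\,[y^{m-r}]\!\left(\frac{y}{f(y)}\right)^m,$$
which is the desired right-hand side.

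The main obstacle is justifying the substitution lemma. By linearity one reduces to monomials $F=x^n$. For $n \neq -1$ both sides vanish: the left trivially, the right because $(n+1)\,f(y)^n f'(y) = (f(y)^{n+1})'$ and the formal derivative of any Laurent series has zero $y^{-1}$ coefficient; since $R$ is an integral domain of characteristic zero, one may cancel the scalar $n+1$. For $n=-1$, reading $f'(y)/f(y) = 1/y + (\text{regular})$ from $f(y) = a_1 y(1+O(y))$ gives $[y^{-1}]f'/f = 1 = [x^{-1}]x^{-1}$. A minor technicality is that $\sum_n c_n f(y)^n$ must define an element of $R((y))$ when $F=\sum_n c_n x^n$ has finite principal part, but only finitely many $n$ contribute to any fixed power of $y$, so this is automatic.
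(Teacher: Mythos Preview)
Your proof is correct and is precisely the argument the paper points to: the paper does not actually prove Theorem \ref{xlinv} but instead refers the reader to \cite[Sec.~1.2]{GJ83} and \cite{Ge16}, remarking only that ``a simple proof involves formal Laurent series and their residues, meaning the coefficients of $x^{-1}$.'' You have supplied exactly that residue argument, including the substitution lemma $[x^{-1}]F(x)=[y^{-1}]F(f(y))f'(y)$ and its verification on monomials, which is the standard route in the cited references.
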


See \cite[Sec. 1.2]{GJ83} or \cite{Ge16} for this result and generalizations. A simple proof involves formal Laurent series and their residues, meaning the coefficients of $x^{-1}$. Note that the right side of \e{xlagr} is in $R[[x]]$ by Proposition \ref{mul},
and using its expansion gives:

\begin{cor} \label{cinv}
Suppose  $f(x)=a_1 x+a_2 x^2+ \cdots$ in $R[[x]]$ has  compositional inverse $b_1 x+b_2 x^2+ \cdots$. Then for positive integers $m$, $r$,
\begin{align}\label{lag}
  b_m = \frac 1{m} \sum_{k=0}^{m-1} \binom{-m}{k} a_1^{-m-k} \dm_{m-1,k}(a_2,a_3,\dots),\\
  \dm_{m,r}(b_1,b_2, \dots) = \frac r{m} \sum_{k=0}^{m-r} \binom{-m}{k} a_1^{-m-k} \dm_{m-r,k}(a_2,a_3,\dots).
  \label{lagxx}
\end{align}
\end{cor}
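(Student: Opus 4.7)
The plan is to apply Theorem \ref{xlinv} (Lagrange inversion) directly, using Proposition \ref{mul} to expand the right-hand side explicitly. Both formulas are really contained in \e{xlagr}; we just have to identify each side with a De Moivre polynomial sum.

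First I would rewrite the left-hand side of \e{xlagr}. By the defining equation \e{bell} applied to $g(x) = b_1 x + b_2 x^2 + \cdots$, we have $[x^m] g(x)^r = \dm_{m,r}(b_1, b_2, \dots)$, so $m[x^m] g(x)^r = m \cdot \dm_{m,r}(b_1, b_2, \dots)$. For the right-hand side, the key observation is that
\[
  \frac{x}{f(x)} = \frac{1}{a_1 + a_2 x + a_3 x^2 + \cdots},
\]
and hence $(x/f(x))^m = (a_1 + a_2 x + a_3 x^2 + \cdots)^{-m}$. Since $a_1$ is invertible in $R$, this lies in $R[[x]]$, and Proposition \ref{mul} (applied with the series indexed from $a_1$ instead of $a_0$, and exponent $-m$ in place of $m$) yields
\[
  \left( a_1 + a_2 x + a_3 x^2 + \cdots \right)^{-m} = \sum_{n=0}^{\infty} \left[ \sum_{k=0}^{n} \binom{-m}{k} a_1^{-m-k} \dm_{n,k}(a_2, a_3, \dots) \right] x^n.
\]

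Extracting the coefficient of $x^{m-r}$ then gives
\[
  r[x^{m-r}](x/f(x))^m = r \sum_{k=0}^{m-r} \binom{-m}{k} a_1^{-m-k} \dm_{m-r,k}(a_2, a_3, \dots).
\]
Combining this with the left-hand side via \e{xlagr} and dividing by $m$ produces \e{lagxx}. Finally, specializing to $r = 1$ and using \e{esb2}, which says $\dm_{m,1}(b_1, b_2, \dots) = b_m$, recovers \e{lag} as an immediate consequence.

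There is no real obstacle here: the proof is just a bookkeeping matching of series, and the only point requiring a moment of care is the index shift when invoking Proposition \ref{mul} (the role of $a_0$ is played by $a_1$, and the De Moivre polynomial accordingly takes arguments $a_2, a_3, \ldots$).
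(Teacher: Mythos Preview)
Your proof is correct and follows exactly the route the paper intends: apply Theorem \ref{xlinv}, identify the left side of \e{xlagr} as $m\,\dm_{m,r}(b_1,b_2,\dots)$ via \e{bell}, expand $(x/f(x))^m=(a_1+a_2x+\cdots)^{-m}$ using Proposition \ref{mul}, and then specialize $r=1$ with \e{esb2} to obtain \e{lag}. The paper's own justification is terse (``using its expansion gives''), and your write-up fills in precisely those steps.
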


Recall that $R$ does not necessarily contain $\Q$. The $1/m$ terms on the right of \e{lag} and  \e{lagxx} must cancel an $m$ factor so that these are statements in $R$. We see this cancellation explicitly in section \ref{divv}.

The next result follows from \e{lag}, in the same way that Theorem \ref{arb} followed from Proposition \ref{comp}, and easily extends to  derivatives of $g(x)^r$. See \cite{Jo02b} for another approach and references.

\begin{cor}
Let $f(x)$ be an $m \gqs 1$ times differentiable function with compositional inverse $g(x)$. Then $g(x)$ is $m$ times differentiable and
\begin{equation} \label{laginv2}
   \frac{1}{m!} \frac{d^m}{dx^m} g(x) = \frac{1}{m}\sum_{k=0}^{m-1} \binom{-m}{k}\frac{1}{f'(g(x))^{m+k}}  \dm_{m-1,k}\left( \frac{f''(g(x))}{2!},\frac{f'''(g(x))}{3!}, \dots  \right).
\end{equation}
\end{cor}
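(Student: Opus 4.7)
The plan is to deduce \eqref{laginv2} from the formal identity \eqref{lag} in exactly the same way Arbogast's formula (Theorem \ref{arb}) was deduced from Proposition \ref{comp}. I would fix an arbitrary point $x_0$ in the domain of $g$ and set $y_0 := g(x_0)$, so that $x_0 = f(y_0)$. To move into the formal-power-series setting used by \eqref{lag}, I translate both functions to the origin by defining
\[
\tilde f(t) := f(y_0+t) - f(y_0), \qquad \tilde g(s) := g(x_0+s) - g(x_0).
\]
Then $\tilde f(0) = \tilde g(0) = 0$, and a direct substitution gives $\tilde f(\tilde g(s)) = f(y_0 + \tilde g(s)) - x_0 = f(g(x_0+s)) - x_0 = s$, so $\tilde f$ and $\tilde g$ are compositional inverses. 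The Taylor coefficients are $a_j = f^{(j)}(y_0)/j!$ and $b_m = g^{(m)}(x_0)/m!$.

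Next I would apply \eqref{lag} to the pair $(\tilde f, \tilde g)$ over $R = \C$ (viewing the finite Taylor expansions up to order $m$ as the truncation of a formal series -- only coefficients of $a_j$ with $j \le m$ enter), obtaining
\[
\frac{g^{(m)}(x_0)}{m!} = b_m = \frac{1}{m}\sum_{k=0}^{m-1}\binom{-m}{k}a_1^{-m-k}\dm_{m-1,k}(a_2,a_3,\dots).
\]
Substituting $a_1 = f'(y_0) = f'(g(x_0))$ and $a_j = f^{(j)}(g(x_0))/j!$ for $j \geq 2$ converts the right-hand side into the expression in \eqref{laginv2}, and since $x_0$ was arbitrary the identity holds pointwise.

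The only nontrivial ingredient is the a priori differentiability of $g$, which is needed so that the Taylor coefficients $b_m$ above are meaningful in the first place. This is exactly what the classical inverse function theorem provides whenever $f'(y_0) \neq 0$, and this nonvanishing is forced by the appearance of $f'(g(x))^{m+k}$ in the denominators of the proposed formula. I expect this analytic bookkeeping -- passing from the formal identity to a pointwise statement about $m$-times differentiable functions, verifying that truncating the Taylor series at order $m$ captures enough information to apply \eqref{lag} up to the $m$-th coefficient -- to be the main obstacle; the algebraic content is immediate from \eqref{lag}. The extension to $g(x)^r$ mentioned in the corollary proceeds identically from \eqref{lagxx}.
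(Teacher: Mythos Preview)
Your proposal is correct and matches the paper's intended approach: the paper says only that the result ``follows from \eqref{lag}, in the same way that Theorem \ref{arb} followed from Proposition \ref{comp},'' and your translation-to-the-origin argument is exactly this specialization of the formal identity \eqref{lag} to Taylor coefficients at an arbitrary point. The analytic bookkeeping you flag (inverse function theorem for $C^m$ regularity of $g$, and the observation that only $a_1,\dots,a_m$ enter the right side of \eqref{lag}) is the right way to make the passage rigorous.
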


A different kind of inversion was needed in \cite[Eqs. (11), (14)]{Moyal}:

\begin{prop}
Let $a_1,a_2,\dots$ be a sequence of complex numbers and set
\begin{equation*}
  b_n(t):= \sum_{k=1}^n \frac{t^k}{k!} \dm_{n,k}(a_1,a_2,\dots) \qquad (n\gqs 1).
\end{equation*}
Then  these relations  may be inverted:
\begin{equation} \label{invv}
  a_n = \frac 1t \sum_{k=1}^n \frac{(-1)^{k+1}}{k} \dm_{n,k}(b_1(t),b_2(t),\dots) \qquad (n\gqs 1).
\end{equation}
\end{prop}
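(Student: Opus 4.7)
The plan is to recognize the two defining relations as $h(x) = e^{tf(x)}-1$ and its inverse $tf(x) = \log(1+h(x))$, where $f(x)=\sum a_n x^n$ and $h(x)=\sum b_n(t) x^n$, and then read off the coefficients using the De Moivre polynomial formulas \eqref{cox} and \eqref{log} already established in section \ref{manx}.

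More concretely, I would first set $f(x):=\sum_{n=1}^\infty a_n x^n$ in $\C[[x]]$ and apply identity \eqref{cox} with $\alpha=t$ to get
\[
 e^{t f(x)}-1 = \sum_{n=1}^\infty \Bigl[\sum_{k=1}^n \tfrac{t^k}{k!}\dm_{n,k}(a_1,a_2,\dots)\Bigr] x^n = \sum_{n=1}^\infty b_n(t)\, x^n,
\]
so that the series $h(x):=\sum_{n\gqs 1} b_n(t) x^n$ equals $e^{tf(x)}-1$. Since $h(x)$ has no constant term, $\log(1+h(x))$ is a well-defined element of $\C[[x]]$, and by the elementary identity $\log(1+(e^{u}-1))=u$ applied formally with $u=tf(x)$ (which is valid in $\C[[x]]$ because both sides reduce to compositions of series with no constant term), we obtain
\[
 t f(x) = \log(1 + h(x)).
\]

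Now I would apply \eqref{log} with $\alpha=1$ and with the role of $f$ played by $h$, whose coefficients are $b_1(t),b_2(t),\dots$:
\[
 [x^n]\log(1+h(x)) = \sum_{k=1}^n \frac{(-1)^{k-1}}{k}\dm_{n,k}(b_1(t),b_2(t),\dots).
\]
Dividing by $t$ and equating with $[x^n]f(x)=a_n$ yields \eqref{invv} at once.

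There is no real obstacle here; the only minor point to check is the formal identity $\log(1+(e^{u}-1))=u$ for $u\in x\C[[x]]$, which follows by composing the power series of $\log(1+y)$ with $y=e^u-1$ and noting that the composition is admissible in the sense recalled at the start of section \ref{manx} (both $e^u-1$ and $\log(1+y)$ vanish at the origin). Alternatively, one can view this step as a direct application of Proposition \ref{comp} and verify term by term, but the conceptual route via \eqref{cox} and \eqref{log} is cleaner and keeps the proof to a few lines.
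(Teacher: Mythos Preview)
Your proof is correct and follows exactly the paper's approach: recognize $1+\sum_{n\gqs 1} b_n(t)x^n = \exp\bigl(t\sum_{n\gqs 1} a_n x^n\bigr)$ via \eqref{cox}, then take logarithms and read off coefficients with \eqref{log}. The paper states this in two lines, invoking Proposition~\ref{comp} directly, but the content is identical.
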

\begin{proof} By Proposition \ref{comp},
 $\exp(t(a_1 x+a_2 x^2+\cdots)) = 1+b_1(t)x+b_2(t)x^2+\cdots$. Then take the log of both sides  to obtain \e{invv}.
\end{proof}

Of course, $\exp(x)$ and $\log(x)$ above may be replaced by other pairs of inverse functions; see \e{tpx} and \e{ptx} for  a different pair. More complicated inversions of this type appear in \cite{BGW}.

\section{Greatest common divisor of the coefficients of $\dm_{n,k}(x_1,x_2,\dots)$} \label{divv}

It follows from Corollary \ref{cinv},
when $a_1=1$ and $R=\Z$, that for integers $n,m,k \gqs 1$ we must have
\begin{equation} \label{dog}
  \frac n{m}  \binom{m}{k}  \dm_{n,k}(x_1,x_2,\dots) \in \Z[x_1,x_2,\dots].
\end{equation}
For example, with $n=9$ and $k=4$ this means that all the coefficients of $\dm_{9,4}$ must be divisible by $4$. The next result, combined with \e{arco2} below, may be used to prove \e{dog} directly.

\begin{theorem} \label{argcd}
For $n\gqs k \gqs 1$,  the $\gcd$ 
 of the coefficients of $\dm_{n,k}$ is   $k/\gcd(n,k)$.
\end{theorem}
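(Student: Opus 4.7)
The plan is to prove both divisibilities $k/\gcd(n,k) \mid \gcd \mid k/\gcd(n,k)$.

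For the first, I would differentiate \eqref{bell} with respect to $x$ and match coefficients of $x^{n-1}$ to obtain
\begin{equation*}
n\, \dm_{n,k}(a_1, a_2, \ldots) \;=\; k \sum_{i=1}^{n-k+1} i\, a_i\, \dm_{n-i,\, k-1}(a_1, a_2, \ldots) \;\in\; k \cdot \Z[a_1, a_2, \ldots].
\end{equation*}
Hence $k \mid n c$ for every coefficient $c$ of $\dm_{n,k}$, giving $k/\gcd(n,k) \mid c$.

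For the reverse, I expose explicit small coefficients by specialization. Setting $a_1 = a_s = 1$ and $a_i = 0$ otherwise collapses \eqref{bell} to $(x+x^s)^k = \sum_{j=0}^k \binom{k}{j} x^{k+j(s-1)}$. Thus for any integer $j$ with $1 \leq j \leq k$ and $j \mid (n-k)$, choosing $s = 1 + (n-k)/j \gqs 2$ identifies $\binom{k}{j}$ as the coefficient of $a_1^{k-j} a_s^{\,j}$ in $\dm_{n,k}$. Consequently the $\gcd$ of the coefficients of $\dm_{n,k}$ divides $\binom{k}{j}$ for every such $j$, and it suffices to verify
\begin{equation*}
\gcd\left\{\textstyle \binom{k}{j} : j \mid (n-k),\ 1 \leq j \leq k\right\} \;=\; k/d, \qquad d := \gcd(n, k) = \gcd(n-k, k).
\end{equation*}

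The identity $j\binom{k}{j} = k \binom{k-1}{j-1}$, together with the observation that $\gcd(k, j) \mid d$ whenever $j \mid (n - k)$, immediately yields $k/d \mid \binom{k}{j}$. For the reverse, the choice $j = 1$ contributes $\binom{k}{1} = k$, so the gcd divides $k$. For each prime $p \mid d$, take $j_p := p^{v_p(d)}$, which divides $d$ (hence $n-k$) and satisfies $j_p \leq d \leq k$. Kummer's theorem applied to the base-$p$ addition $p^{v_p(d)} + (k - p^{v_p(d)}) = k$ produces exactly $v_p(k) - v_p(d)$ carries, so $v_p\!\left(\binom{k}{j_p}\right) = v_p(k/d)$ with equality. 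The integer $G := \gcd_j \binom{k}{j}/(k/d)$ therefore divides $d$ (from $j=1$) yet is coprime to every prime dividing $d$, forcing $G = 1$.

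The main technical obstacle is that carry count: one must track the borrow produced when computing $k - p^{v_p(d)}$ in base $p$ (which propagates from position $v_p(d)$ up to position $v_p(k)$ since $v_p(k) \geq v_p(d)$) and verify that the subsequent addition reintroduces precisely those $v_p(k) - v_p(d)$ carries and no others. Once this carry-counting lemma is in place, the rest of the argument is routine.
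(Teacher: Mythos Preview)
Your proof is correct. The overall architecture matches the paper's---show $k/\gcd(n,k)$ divides every coefficient, then exhibit enough coefficients of the form $\binom{k}{j}$ to force the gcd down to $k/\gcd(n,k)$---but the two halves are handled by different devices. For the lower bound the paper manipulates multinomials directly, using $\frac{j_r}{k}\binom{k}{j_1,\dots,j_m}=\binom{k-1}{j_1,\dots,j_r-1,\dots,j_m}$ to deduce that $k/\gcd(k,j_1,\dots,j_m)$ divides each coefficient; your generating-function differentiation gives the same divisibility in one line and is arguably cleaner. For the upper bound both arguments specialize to two nonzero variables to produce the witnesses $\binom{k}{j}$, and both finish with a $p$-adic digit computation: the paper packages this as a separate lemma via Legendre's formula for $s_p(\alpha\beta p^x-1)$, while you invoke Kummer's theorem and count carries in $p^{v_p(d)}+(k-p^{v_p(d)})$. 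These are equivalent bookkeeping for the same phenomenon; your version pins down the exact valuation $v_p\binom{k}{j_p}=v_p(k/d)$ rather than just an upper bound, which streamlines the endgame slightly. The carry count you flag as the technical obstacle is indeed routine once written out (the borrow in $k-p^{v_p(d)}$ propagates through positions $v_p(d),\dots,v_p(k)-1$, and the addition restores exactly those carries), so nothing is missing.
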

\begin{proof} Let $\Delta$ be the $\gcd$ 
 of the coefficients of $\dm_{n,k}$. We may assume that $n>k$ as the theorem is clearly true when $n=k$ by \e{fsb}.
Recall that the coefficients of $\dm_{n,k}$ are the multinomial coefficients
\begin{equation}\label{arco}
  \binom{k}{j_1 , j_2 ,  \dots , j_m} \quad \text{with} \quad 1j_1+2 j_2+ \dots +mj_m= n.
\end{equation}
Taking $j_1 = k-1$ and $j_{n-k+1}=1$ gives the coefficient $k$ and so $\Delta \mid k$. Our next goal is to show that $(k/\gcd(n,k)) \mid \Delta$.

The elegant argument in \cite{GS95} for binomial coefficients adapts nicely to multinomial coefficients as follows.
It is evident that when
\begin{equation*}
  C:= \binom{k}{j_1 , j_2 ,  \dots , j_m} \quad \text{we have} \quad \frac{j_r}{k} C= \binom{k-1}{j_1 ,  \dots ,j_r-1 ,  \dots , j_m}
\end{equation*}
for $1\lqs r\lqs m$ with $j_r \neq 0$.
Therefore $C_r:= j_r C/k$ is always an integer for $j_r \gqs 0$ and so
\begin{align*}
  k \cdot \gcd\left( C,C_1,C_2, \dots ,C_m\right)
& = \gcd\left( k C,k C_1,k C_2, \dots ,k C_m\right) \\
& = \gcd\left( k C, j_1 C, j_2 C, \dots, j_m C\right) \\
& =  C\gcd(k,j_1,j_2,\dots,j_m).
\end{align*}
Hence, for any multinomial coefficient,
\begin{equation}\label{arco2}
  \frac k{\gcd(k,j_1,j_2,\dots,j_m)}  \quad \text{divides} \quad  \binom{k}{j_1 , j_2 ,  \dots , j_m}.
\end{equation}
Now $\gcd(j_1,j_2,\dots,j_m) \mid n$ by the right side of \e{arco} and it follows that $k/\gcd(n,k)$ divides all the coefficients of $\dm_{n,k}$.

So far we have demonstrated that $(k/\gcd(n,k)) \mid \Delta$ and $\Delta \mid k$. The next lemma will let us find coefficients that ensure $ \Delta \mid (k/\gcd(n,k))$.

\begin{lemma} \label{tro}
Let $\alpha$ and $\beta$ be  positive integers. Then
\begin{equation} \label{chil}
  \gcd\left(\binom{\alpha \beta}{d_1}, \binom{\alpha \beta}{d_2}, \dots , \binom{\alpha \beta}{d_r} \right)  \quad \text{divides} \quad  \alpha
\end{equation}
where $d_1,d_2, \dots,d_r$ are all the divisors of $\beta$.
\end{lemma}
\begin{proof}
For $p$  any prime, let $\nu_p$ denote the $p$-adic valuation. Set $x:=\nu_p(\alpha)$, $y:=\nu_p(\beta)$ so that $\alpha=\alpha' p^x$ and $\beta=\beta' p^y$ for $\alpha'$, $\beta'$ prime to $p$. We want to establish
\begin{equation} \label{chil2}
  \nu_p \left( \binom{\alpha \beta}{p^y}\right) = x,
\end{equation}
since that will show that the largest power of $p$ dividing the left side of \e{chil} is the power of $p$ that divides $\alpha$.

Use the notation $s_p(n)$ to mean the sum of the base $p$ digits of $n$. Legendre's  well-known formula implies, as in \cite[Eq. (1.6)]{St09},
\begin{equation*}
  \nu_p \left( \binom{\alpha \beta}{p^y}\right) = \frac{s_p(p^y)+s_p(\alpha \beta-p^y)-s_p(\alpha \beta)}{p-1}.
\end{equation*}
The numerator on the right equals
\begin{equation*}
  s_p(p^y)+s_p((\alpha'\beta'p^x-1)p^y)-s_p(\alpha'\beta'p^{x+y}) = 1+s_p(\alpha'\beta'p^x-1)-s_p(\alpha'\beta').
\end{equation*}
If $\alpha'\beta'$ has the base $p$ representation $c_1c_2 \cdots c_k$ then $c_k \neq 0$ and $\alpha'\beta'p^x$ has the same digits with $x$ zeros added on the right. Clearly $\alpha'\beta'p^x -1$ will have the  representation
$$
  c_1c_2 \cdots (c_k-1)(p-1) \cdots (p-1),
$$
so that $s_p(\alpha'\beta'p^x -1) = s_p(\alpha'\beta')-1+x(p-1)$. Then \e{chil2} follows and the lemma is proved.
\end{proof}

Now let $d$ be any divisor of $\gcd(n,k)$. We consider coefficients in \e{arco} where only two numbers $j_1$ and $j_r$ are nonzero, choosing $r:=(n-k)/d+1>1$. Check that $j_1=k-d$ and $j_r=d$ gives $1j_1+rj_r=n$ and so
\begin{equation}\label{wind}
  \binom{k}{d}   \quad \text{for} \quad d \mid \gcd(n,k)
\end{equation}
 is a coefficient of $\dm_{n,k}$. Apply Lemma \ref{tro} with $\alpha=k/\gcd(n,k)$ and $\beta=\gcd(n,k)$ to see that the $\gcd$ of the coefficients of the form \e{wind} must divide $k/\gcd(n,k)$. Therefore $ \Delta \mid (k/\gcd(n,k))$ as we wanted. This finishes the proof of Theorem \ref{argcd}.
\end{proof}

By comparison, the $\gcd$s of the coefficients of the partial Bell polynomials are much larger.  It may be shown  for example that  $\bl_{k+3,k}$ has $\gcd$ equalling $\binom{k+3}{4}$.

\section{Determinant formulas} \label{dets}

Define the $n \times n$ matrix
\begin{equation*}
  \mathcal{M}_n(t) := \left(
  \begin{matrix}
  a_1 t & 1 & &  \\
  a_2 t & a_1 t & 1 &  \\
  \vdots & & \ddots &  \\
  a_n t & a_{n-1} t & \dots &  a_1 t
  \end{matrix}
  \right),
\end{equation*}
with ones above the main diagonal and zeros above those.
The next result is a slightly simplified version of \cite[Thm. 3.1]{EJ}.

\begin{prop} \label{detp}
We have
\begin{equation} \label{far}
  \sum_{k=0}^n (-1)^{n+k} t^k \dm_{n,k}(a_1,a_2, \dots) = \det \mathcal{M}_n(t).
\end{equation}
\end{prop}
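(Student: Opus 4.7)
The plan is to derive a recursion for $D_n := \det \mathcal{M}_n(t)$ and then verify \e{far} inductively using the De Moivre recurrence \e{rec2}.

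First I would cofactor-expand $D_n$ along the first column, whose entries are $a_1 t, a_2 t, \dots, a_n t$. For each $i \in \{1,\dots,n\}$, the minor $M_{i,1}$ arises by first deleting column $1$ (which shifts the superdiagonal of $1$'s onto the main diagonal) and then deleting row $i$. A careful look at the resulting $(n-1)\times(n-1)$ matrix shows that it is block lower-triangular: the top-left $(i-1)\times(i-1)$ block is lower-triangular with $1$'s on the diagonal, the top-right block is zero, and the bottom-right $(n-i)\times(n-i)$ block is a copy of $\mathcal{M}_{n-i}(t)$. Hence $M_{i,1} = D_{n-i}$, and cofactor expansion yields
\begin{equation*}
D_n \;=\; \sum_{i=1}^{n} (-1)^{i-1}\, a_i\, t \cdot D_{n-i}, \qquad D_0 = 1.
\end{equation*}

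Next I would induct on $n$. Assuming $D_m = \sum_{\ell=0}^{m}(-1)^{m+\ell} t^{\ell}\, \dm_{m,\ell}$ for $m < n$, substituting into the recursion and changing variables to $k = \ell + 1$ gives
\begin{equation*}
D_n \;=\; \sum_{j=1}^{n}(-1)^{j-1} a_j\, t \sum_{\ell=0}^{n-j}(-1)^{n-j+\ell} t^{\ell}\, \dm_{n-j,\ell}
\;=\; \sum_{k\gqs 1} (-1)^{n+k} t^{k} \sum_{j\gqs 1} a_j\, \dm_{n-j,\,k-1}.
\end{equation*}
By \e{rec2}, the inner sum equals $\dm_{n,k}$, and the missing $k=0$ term may be appended harmlessly since $\dm_{n,0} = 0$ for $n \gqs 1$. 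This yields \e{far}.

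As an alternative, one can repackage the same argument with generating functions: setting $A(x) = \sum_{j\gqs 1} a_j x^j$ and $F(x) = \sum_{n\gqs 0} D_n x^n$, the recursion translates to $F(x)\,(1 + t\,A(-x)) = 1$, and then expanding $1/(1+tA(-x))$ as a geometric series and using $A(-x)^k = \sum_n (-1)^n \dm_{n,k}\, x^n$ reads off \e{far} directly. The main obstacle is the bookkeeping in the first step: confirming that the minor $M_{i,1}$ really does split cleanly into the two blocks described. Once the effect of deleting column $1$ on the superdiagonal is understood, everything is routine, but the indexing deserves to be checked on a small example before being written in generality.
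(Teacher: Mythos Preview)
Your proof is correct. The cofactor expansion along the first column is clean and the block structure you describe is exactly right; the induction via \e{rec2} then goes through as written, and your generating-function repackaging is also valid.

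The paper in fact mentions your inductive approach in one line (``An induction proof is possible since repeatedly expanding the determinant along the top row gives $\det \mathcal{M}_n(1) = a_1 \det \mathcal{M}_{n-1}(1) - a_2 \det \mathcal{M}_{n-2}(1) + \cdots$''), but then opts for a different argument it calls ``more illuminating'': after reducing to $t=1$ via \e{gsb2}, it sets $u_n := \sum_{k=0}^n (-1)^k \dm_{n,k}(a_1,a_2,\dots)$, uses \e{recip} to obtain $(1+a_1 x + a_2 x^2 + \cdots)(1 + u_1 x + u_2 x^2 + \cdots) = 1$, rewrites this as a lower-triangular linear system for $u_1,\dots,u_n$, and solves for $u_n$ by Cramer's rule, producing the determinant directly. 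Your generating-function alternative is the same identity $F(x)(1+tA(-x))=1$ read the other way (expand $F$ rather than solve for it), so the two ``second proofs'' are close cousins. What the Cramer route buys is a one-shot identification of $u_n$ with a determinant without any induction; what your route buys is that it never needs the reduction to $t=1$ and keeps the argument entirely within the recursion \e{rec2}.
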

\begin{proof}
By \e{gsb2}, $t^k \dm_{n,k}(a_1,a_2, \dots)= \dm_{n,k}(a_1 t,a_2 t, \dots)$ and so it is enough to prove \e{far} for $t=1$. An induction proof is possible since repeatedly expanding the determinant along the top row gives
\begin{equation*}
  \det \mathcal{M}_n(1) = a_1 \det \mathcal{M}_{n-1}(1) - a_2  \det \mathcal{M}_{n-2}(1)+ \cdots + (-1)^{n-1} a_n.
\end{equation*}
A more illuminating proof uses \e{recip}. For $u_n:= \sum_{k=0}^n (-1)^{k} \dm_{n,k}(a_1,a_2, \dots)$ we obtain
$$
(1+a_1 x+a_2 x^2+ \cdots)(1+u_1 x+u_2 x^2+ \cdots) =1.
$$
Therefore
\begin{equation}\label{mat}
\left(
  \begin{matrix}
  1  &  & &  & \\
  a_1  & 1  &  &  & \\
  a_2  & a_1  & 1 & &  \\
  \vdots & & \ddots &  & \\
  a_{n-1}  & a_{n-2}  & \dots & a_1 & 1
  \end{matrix}
  \right)
  \left(
  \begin{matrix}
  u_1  \\
  u_2 \\
  u_3  \\
  \vdots   \\
  u_n
  \end{matrix}
  \right)
  =
  \left(
  \begin{matrix}
  -a_1  \\
  -a_2 \\
  -a_3  \\
  \vdots   \\
  -a_n
  \end{matrix}
  \right)
\end{equation}
and, by Cramer's rule, $u_n=\det U_n/\det U$ where $U$ is the matrix on the left of \e{mat} and $U_n$ is $U$ with its $n$th column replaced by the column on the right side of \e{mat}. Moving this $n$th column of $U_n$ to the left side and changing its sign introduces a $(-1)^n$ factor. 
\end{proof}

The $\dm_{n,k}$ polynomials can be isolated in \e{far} since clearly
\begin{equation*}
  \dm_{n,k}(a_1,a_2, \dots) = (-1)^{n+k} [t^k] \det \mathcal{M}_n(t).
\end{equation*}
We may also give exponential and logarithmic versions of Proposition \ref{detp}.
Define the matrices
\begin{equation*}
  \mathcal{N}_n(t) := \left(
  \begin{matrix}
  a_1 t & 1 & & & \\
  a_2 t & a_1 t & 2 & & \\
  a_3 t & a_2 t & a_1 t & 3 & \\
  \vdots & & & \ddots &  \\
  a_n t & a_{n-1} t & \dots & a_2 t & a_1 t
  \end{matrix}
  \right),
  \qquad
  \mathcal{O}_n(t) := \left(
  \begin{matrix}
   a_1 t & 1 & & & \\
  2a_2 t & a_1 t & 1 & & \\
  3a_3 t & a_2 t & a_1 t & 1 & \\
  \vdots & & & \ddots &  \\
  n a_n t & a_{n-1} t & \dots & a_2 t & a_1 t
  \end{matrix}
  \right),
\end{equation*}
which are the same as $\mathcal{M}_n(t)$ except for multiplying by a factor $j$ on row $j$, above the main diagonal in $\mathcal{N}_n(t)$ and in the first column for $\mathcal{O}_n(t)$.

\begin{prop} \label{detp2}
We have
\begin{align} \label{far2}
  \sum_{k=0}^n (-1)^{n+k} \frac{t^k}{k!} \dm_{n,k}\left(\frac{a_1}1,\frac{a_2}2, \dots\right) & = \frac{1}{n!}\det \mathcal{N}_n(t),\\
  \sum_{k=0}^n (-1)^{n+k} \frac{t^k}{k} \dm_{n,k}\left(a_1,a_2, \dots\right) & = \frac{1}{n}\det \mathcal{O}_n(t).
  \label{far3}
\end{align}
\end{prop}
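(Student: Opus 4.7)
The plan is to adapt the ``more illuminating'' proof of Proposition \ref{detp} to the exponential and logarithmic settings. In that argument the identity $g(x)g(x)^{-1}=1$ produced a triangular linear system to which Cramer's rule was applied. Here the analogous triangular systems come instead from the first-order ODEs satisfied by $e^{-tf(x)}$ and $\log(1+tf(x))$, and it is this differential structure that introduces the factors $1,2,\dots,n$ appearing on the superdiagonal of $\mathcal{N}_n(t)$ and in the first column of $\mathcal{O}_n(t)$.

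For \e{far2}, I would set $f(x):=\sum_{j\gqs 1}(a_j/j)x^j$ and $E_n:=[x^n]e^{-tf(x)}$, so that \e{cox} with $\alpha=-t$ identifies the LHS of \e{far2} with $(-1)^n E_n$. Comparing coefficients of $x^{n-1}$ in the ODE $\frac{d}{dx}e^{-tf(x)}=-tf'(x)\,e^{-tf(x)}$, where $f'(x)=a_1+a_2 x+a_3 x^2+\cdots$, yields
\begin{equation*}
  n E_n + t\bigl(a_1 E_{n-1}+a_2 E_{n-2}+\cdots+a_{n-1} E_1\bigr) = -t a_n, \qquad E_0=1.
\end{equation*}
This is a lower triangular system for $E_1,\dots,E_n$ whose coefficient matrix has determinant $n!$ and whose right-hand side is $(-ta_1,\dots,-ta_n)^T$. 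Applying Cramer's rule, moving the replacement (last) column to the front of the numerator matrix (sign $(-1)^{n-1}$), and pulling a $-1$ out of that column (sign $-1$), produces exactly $\mathcal{N}_n(t)$, giving $E_n=(-1)^n\det\mathcal{N}_n(t)/n!$, which is \e{far2}.

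For \e{far3}, I would set $f(x):=a_1 x+a_2 x^2+\cdots$, $g(x):=1+tf(x)$, and $L_n:=[x^n]\log g(x)$, so that \e{log} with $\alpha=t$ makes the LHS of \e{far3} equal to $(-1)^{n+1}L_n$. Expanding the ODE $L'(x)g(x)=g'(x)$ and comparing coefficients of $x^{n-1}$ yields the triangular system
\begin{equation*}
  n L_n + t\bigl(1\cdot L_1 a_{n-1}+2\cdot L_2 a_{n-2}+\cdots+(n-1)L_{n-1}a_1\bigr) = tn\,a_n,
\end{equation*}
whose coefficient matrix again has determinant $n!$ and whose right-hand side is $(ta_1,2ta_2,\dots,nta_n)^T$. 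Cramer's rule expresses $L_n$ as a ratio; in the numerator matrix I would factor $j$ out of column $j$ for $j=1,\dots,n-1$ (total factor $(n-1)!$), then move the replacement column to the front (sign $(-1)^{n-1}$) to recognize $\mathcal{O}_n(t)$. The combined $(n-1)!/n!=1/n$ gives $L_n=(-1)^{n-1}\det\mathcal{O}_n(t)/n$, which rearranges to \e{far3}.

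The main obstacle will be the sign and factor bookkeeping in the Cramer manipulations: one must carefully track the column permutation, the extracted scalars ($-1$ in the exponential case, and the column scalars $1,2,\dots,n-1$ in the logarithmic case), and the $n!$ from the coefficient matrix, so that the final combination produces the signs $(-1)^{n+k}$ together with the normalizations $1/n!$ and $1/n$ stated in \e{far2} and \e{far3}, respectively.
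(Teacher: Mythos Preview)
Your proposal is correct and follows essentially the same route as the paper: for \e{far2} the paper likewise differentiates the exponential series (writing $\alpha_j:=-a_j/j$ and $e^{f(x)}f'(x)=\sum n v_n x^{n-1}$) to obtain a triangular linear system with diagonal $1,2,\dots,n$, then applies Cramer's rule exactly as in Proposition~\ref{detp}; it then says the proof of \e{far3} is similar, based on \e{log}, which is precisely your $L'(x)g(x)=g'(x)$ argument. Your version keeps $t$ throughout rather than reducing to $t=1$, but this is a cosmetic difference.
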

\begin{proof}
To prove \e{far2} we use \e{cox} and it is convenient to set $\alpha_j:=-a_j/j$ and $f(x)=\alpha_1 x+\alpha_2 x^2+ \cdots$. For $v_n:= \sum_{k=0}^n \frac{1}{k!} \dm_{n,k}(\alpha_1,\alpha_2, \dots)$ we obtain
$$
e^{f(x)} = \sum_{n=0}^\infty v_n x^n, \qquad e^{f(x)}f'(x) = \sum_{n=1}^\infty n \cdot v_n x^{n-1}.
$$
 It follows that
$$
(1+v_1 x+v_2 x^2+ \cdots)(\alpha_1+2\alpha_2 x+3\alpha_3 x^2+ \cdots) =(v_1+2v_2 x+3v_3 x^2+ \cdots),
$$
and hence
\begin{equation}\label{mat2}
\left(
  \begin{matrix}
  1  &  & &  & \\
  a_1  & 2  &  &  & \\
  a_2  & a_1  & 3 & &  \\
  \vdots & & \ddots &  & \\
  a_{n-1}  & a_{n-2}  & \dots & a_1 & n
  \end{matrix}
  \right)
  \left(
  \begin{matrix}
  v_1  \\
  v_2 \\
  v_3  \\
  \vdots   \\
  v_n
  \end{matrix}
  \right)
  =
  \left(
  \begin{matrix}
  -a_1  \\
  -a_2 \\
  -a_3  \\
  \vdots   \\
  -a_n
  \end{matrix}
  \right).
\end{equation}
The proof of \e{far2} is now completed using the same steps as in Proposition \ref{detp}. The proof of \e{far3} is similar, based on \e{log}.
\end{proof}

\section{Generating functions} \label{gfn}


Some generating functions that lead to interesting De Moivre expressions are shown next. See for example \cite[Sect. 1.14]{Comtet} and \cite[Chapters 6, 7]{Knu} for more on generating functions.


\subsection{Partitions}
A partition of a positive integer $n$ is a non-increasing sequence of positive integers that sum to $n$. If $p_k(n)$ denotes the number of partitions of $n$ with at most $k$ parts,   then clearly
\begin{equation} \label{pnk}
  p_k(n) = \sum_{\substack{1j_1+2 j_2+ \dots +nj_n= n \\ j_1+ j_2+ \dots +j_n \lqs k}}
 1
\end{equation}
where  the sum   is over all possible $j_1$, $j_2$,  \dots , $j_n \in \Z_{\gqs 0}$ and $j_r$ indicates the number of parts of size $r$.
Comparing \e{pnk} with \e{bell} confirms the statement in Definition \ref{dbf} that the number of terms in $\dm_{n,k}$ is the number of partitions of $n$ with  exactly $k$ parts. This is $p_k(n)-p_{k-1}(n)$ and equals $p_k(n-k)$ as seen from
the generating function
\begin{equation}\label{pkn}
\sum_{n=0}^\infty p_k(n) q^n =  \prod_{j=1}^k \frac 1{1-q^j}.
\end{equation}
Perhaps the most elegant formulas for $p_k(n)$ use Sylvester's theory of waves from 1857 and quasipolynomials, as described in \cite{OS18b}. With \e{recip} we can produce the less enlightening
\begin{equation*}
  p_4(n)= \sum_{k=0}^n \dm_{n,k}(1, 1, 0, 0, -2, 0, 0, 1, 1, -1, \dots),
\end{equation*}
for example, where the rest of the sequence consists of zeros.

Let $p(n)$ denote the number of unrestricted partitions of $n$. Their generating function is \e{pkn} with $k=\infty$. As we saw above, the number of terms in $\dm_{m+k,k}$ is $p_k(m)$. This equals $p(m)$ for $k\gqs m$ and the terms correspond to solutions of $1j_2+2j_3+ \cdots +m j_{m+1}=m$ in \e{bell2}. De Moivre's description in \cite{dem} essentially gives what must be one of the earliest recursive constructions of the partitions of an integer $m$: they are $1$ followed by the partitions of $m-1$, then $2$ followed by the partitions of $m-2$ with smallest part at least $2$, then $3$ followed by the partitions of $m-3$ with smallest part at least $3$, and so on as far as $\lfloor m/2 \rfloor$. Lastly include $m$.


To find a first expression for $p(n)$ we may use Euler's pentagonal number theorem,
\begin{equation*}
  \prod_{j=1}^\infty (1-q^j) = \sum_{r\in \Z} (-1)^r q^{r(3r-1)/2} = \sum_{m=0}^\infty c_m q^m,
\end{equation*}
with $c_m=(-1)^r$ if $m=r(3r-1)/2$ for some $r\in \Z$ and $c_m=0$ otherwise. Therefore, employing \e{recip} again,
\begin{equation*}
  p(n)=\sum_{k=0}^n \dm_{n,k}(-c_1,-c_2,-c_3, \dots) = \sum_{k=0}^n \dm_{n,k}(1,1,0,0,-1,0,-1, \dots).
\end{equation*}

Ramanujan's tau function has the related generating function
\begin{equation} \label{tau}
  \sum_{n=1}^\infty \tau(n) q^n = q \prod_{j=1}^\infty (1-q^j)^{24}
\end{equation}
where the right side of \e{tau} is a modular form when $q=e^{2\pi i z}$, (a weight $12$ cusp form for $\SL_2(\Z)$). Then easily,
\begin{equation} \label{tau2}
  \tau(n)  = \dm_{n+23,24}(c_0,c_1,c_2,\dots) = \dm_{n+23,24}(1,-1,-1,0,0,1,0,\dots)
\end{equation}
and  Theorem \ref{argcd} implies that $\tau(n)$ is divisible by $24/\gcd(n+23,24)$. The congruence properties of $\tau(n)$ and $p(n)$ have been of great interest, and Lehmer's 1947 question as to whether $\tau(n)$ is ever zero remains unanswered.
By \e{pot} we also have the relations
\begin{align} \label{tpx}
\tau(n) & =  \sum_{k=0}^{n-1} \binom{-24}{k} \dm_{n-1,k}(p(1),p(2),p(3),\dots),\\
  p(n) & =  \sum_{k=0}^{n} \binom{-1/24}{k} \dm_{n,k}(\tau(2),\tau(3),\tau(4),\dots).  \label{ptx}
\end{align}

Set $\sigma(n):=\sum_{d | n} d$, the sum of the divisors of $n$. For another  example, take the log of the right side of \e{pkn} when $k=\infty$, expand this into a series and then exponentiate to produce 
\begin{equation} \label{sig}
  p(n)=\sum_{k=0}^n \frac{1}{k!}\dm_{n,k}\left(\frac{\sigma(1)}{1},\frac{\sigma(2)}{2},\frac{\sigma(3)}{3}, \dots \right),
\end{equation}
which may also be inverted. The identity \e{sig} comes from \cite[p. 185]{Ri68}, and  for further identities along these lines see \cite{Jha}.

The authors in \cite[Thm. 3]{BKO} develop a universal recursion for the Fourier coefficients of modular forms. The required polynomials  are in fact De Moivre polynomials, and as a special case (see their remark after the theorem) we find:
\begin{equation}\label{brun}
  \tau(n+1) = -\frac{24\sigma(n)}{n}+\sum_{k=2}^n \frac{(-1)^k}{k} \dm_{n,k}(\tau(2),\tau(3),\dots) \qquad (n\gqs 1).
\end{equation}
The form of \e{brun} now suggests it has an easy proof: take the log of both sides of \e{tau} and expand each series.

\subsection{Orthogonal polynomials}

Bell in \cite{Bell34} was initially interested in generalizing the  Hermite polynomials $H_n(x)$. They may be expressed as $n! [t^n]\exp(2xt-t^2)$ and in Bell's original polynomials equal $\by_n(2x,-2,0,0,\dots)$. In our notation
\begin{equation}\label{her}
  H_n(x) = \sum_{k=0}^n \frac{n!}{k!} \dm_{n,k}(2x,-1,0,0,\dots).
\end{equation}
Some other classical orthogonal polynomials have similar descriptions, as seen in \cite[pp. 449 - 452]{chacha}. For example, the Gegenbauer polynomials may be defined with $C^{(\lambda)}_n(x) := [t^n](1-2tx+t^2)^{-\lambda}$ giving
\begin{equation}\label{geg}
  C^{(\lambda)}_n(x)  = \sum_{k=0}^n \binom{k+\lambda-1}{k}\dm_{n,k}(2x,-1,0,0,\dots)
\end{equation}
by \e{pot}.
Special cases are the  Legendre polynomials $P_n(x)$ with $\lambda=1/2$, the  Chebyshev polynomials of the second kind $U_n(x)$ with $\lambda=1$, and the usual  Chebyshev polynomials $T_n(x)$ which may found as a limit when $\lambda \to 0$. More explicitly, use $T_n(x) = [t^n](1-tx)/(1-2tx+t^2)$
to see that
\begin{equation}\label{che}
  T_n(x)  = \sum_{k=0}^n \Bigl[ \dm_{n,k}(2x,-1,0,0,\dots)-x \cdot \dm_{n-1,k}(2x,-1,0,0,\dots) \Bigr].
\end{equation}
The  Fibonacci numbers $F_n$ equal $[t^n] t/(1-t-t^2)$ and fit the same pattern with
\begin{equation}\label{fib}
  F_{n+1} = \sum_{k=0}^n  \dm_{n,k}(1,1,0,0,\dots).
\end{equation}
The identity
\begin{equation}\label{expl}
  \dm_{n,k}(x,y,0,0,\dots) = \binom{k}{n-k} x^{2k-n}y^{n-k} \qquad (n\gqs k)
\end{equation}
may also be employed in \e{her} -- \e{fib}.  For example, using \e{expl} in \e{che} yields
\begin{equation}\label{ched}
  T_n(x)  = \sum_{k=1}^n (-1)^{n-k} \frac{n}{2k} \binom{k}{n-k}(2x)^{2k-n} \qquad (n\gqs 1).
\end{equation}
In view of the usual definition of $T_n$ by $T_n(\cos \theta)=\cos n\theta$, and De Moivre's formula for the $n$th power of a complex number, it is not surprising that the polynomial
 \e{ched} already appears in the work of De Moivre, 
 predating Chebyshev's introduction of $T_n(x)$ by over 100 years. Consult \cite[p. 246]{ScIv} and \cite{Gi} for further details.

\subsection{Bernoulli numbers and polynomials}
The  Bernoulli numbers are usually defined by $B_n := n![t^n] t/(e^t-1)$. Another application of \e{recip} yields
\begin{equation}\label{ber}
  B_{n} = n!\sum_{k=0}^n  \dm_{n,k}\left(-\frac 1{2!},-\frac 1{3!},-\frac 1{4!},\dots  \right).
\end{equation}
\vskip -2mm
\noindent
Stern gave further similar expressions for $B_{n}$ in \cite{Ste}, employing the elaborate notation ${}_{\mathfrak{p}}^n\overset{k}{C'}$ for $\dm_{n,k}$.
Using the well-known power series for $\tan$ and $\arctan$ in Corollary \ref{cinv} also gives the alternative
 \begin{equation}\label{ber2}
  2^{n+2}(2^{n+2}-1) \frac{B_{n+2}}{n+2} =  n! \sum_{k=0}^n  \binom{n+k}{k} \dm_{n,k}\left(0,-\frac 1{3},0,-\frac 1{5},0,\dots  \right).
\end{equation}

Extending \e{ber} to the  Bernoulli polynomials $B_n(x) := n![t^n] te^{tx}/(e^t-1)$ results in
\begin{equation}\label{berp}
  B_{n}(x) = n!\sum_{k=0}^n  \dm_{n,k}\left(\frac{(-x)^2-(1-x)^2}{2!},\frac{(-x)^3-(1-x)^3}{3!},\dots  \right).
\end{equation}
It may be seen from \e{berp} that $B_n(x)$ is a polynomial in $x$ of degree $n$ and satisfies $B_n(1-x)=(-1)^n B_n(x)$ by \e{gsb3}.

The  N\"{o}rlund polynomials $B_{n}^{(x)} :=n![t^n] (t/(e^t-1))^x$ generalize the Bernoulli numbers in another direction. Equation \e{pot} finds
\begin{equation}\label{nor}
  B_{n}^{(x)} = n!\sum_{k=0}^n  \binom{-x}{k}\dm_{n,k}\left(\frac 1{2!},\frac 1{3!},\frac 1{4!},\dots  \right),
\end{equation}
making it more obvious by \e{bin} that $B_{n}^{(x)}$ is also a degree $n$ polynomial in $x$.
Another expression is proved in \cite[Eq. 15]{SrTo88}:
\begin{equation} \label{nor2}
  B_n^{(x)} = \sum_{k=0}^n  \binom{-x}{k} \binom{x+n}{n-k}  \binom{n+k}{k}^{-1} \stirb{n+k}{k}.
\end{equation}

\subsection{Cyclotomic polynomials}
  Ramanujan sums and cyclotomic polynomials may be expressed in terms of the M\"{o}bius function $\mu$ with
\begin{equation*}
  r_j(m)=\sum_{d \mid (m,j)}\mu(m/d) \cdot d, \qquad \Phi_n(x)=\prod_{d \mid n}\left(1 - x^d \right)^{\mu(n/d)} \quad (n\gqs 2),
\end{equation*}
respectively.
The authors in \cite[Thm. 4.2]{Mor}  write Lehmer's 1966 formula for $\Phi_n(x)$ in an appealing way using  Bell polynomials. It is even more transparent without the unnecessary factorials:
\begin{equation}\label{leh}
  \Phi_n(x)= \sum_{m=0}^\infty \left[\sum_{k=0}^m \frac{1}{k!} \dm_{m,k}\left(-\frac{r_1(n)}{1},-\frac{r_2(n)}{2},-\frac{r_3(n)}{3}, \dots \right) \right] x^m,
\end{equation}
for $n\gqs 2$, and  the easy proof is  similar to that of \e{sig}.

\section{Asymptotic expansions} \label{laps}

In this final section we discuss examples where the De Moivre polynomials play a useful role in describing the behavior of functions and sequences as a parameter goes to infinity.

\subsection{Partition asymptotics}
Hardy and Ramanujan famously gave the first asymptotic expansion for the partition function $p(n)$ in 1918. Rademacher later showed how a small alteration turned this into a rapidly converging series: \cite[Eq. (128.1)]{Ra}. A simpler, though less accurate, expansion for $p(n)$ takes the form
\begin{equation}\label{less}
  p(n)=\frac{1}{4\sqrt{3}n}e^{\pi\sqrt{2n/3}}\left(1+\frac{\hr_1}{n^{1/2}}+
\frac{\hr_2}{n^{2/2}}+ \cdots +\frac{\hr_{R-1}}{n^{(R-1)/2}}+O\left(\frac{1}{n^{R/2}}\right) \right)
\end{equation}
showing the main term along with smaller corrections. The constants $\hr_r$ are  quite complicated, but we may give them a reasonable description. First define
\begin{align}
  \alpha_j(x) & := \begin{cases} 24^{-j/2} & \text{ for $j$ even} \\
\displaystyle -24^{(1-j)/2} \frac 1x \binom{j/2}{(j-1)/2} & \text{ for $j$ odd},
\end{cases} \label{mv}\\
  \beta_\ell(x) & := \sum_{\ell/2\lqs m \lqs \ell} (-24)^{-m} \frac{x^{2m-\ell}}{(2m-\ell)!} \dm_{m,2m-\ell}\left( \binom{1/2}{1},\binom{1/2}{2},\binom{1/2}{3}, \dots \right). \label{mv2}
\end{align}

\begin{prop}
Fix $R\gqs 1$. Let $\hr_r$ be defined by $\sum_{j=0}^r \alpha_j(x)\cdot \beta_{r-j}(x)$ with $x=\pi \sqrt{2/3}$. For these values, \e{less} is true as $n \to \infty$ with an implied constant depending only on $R$.
\end{prop}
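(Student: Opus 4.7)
The plan is to extract an asymptotic expansion from the leading ($k{=}1$) term of Rademacher's exact series for $p(n)$, carrying out all series manipulations via the De Moivre polynomial machinery of section \ref{manx}. Rademacher's formula reads
\[ p(n) = \frac{1}{\pi\sqrt{2}}\sum_{k=1}^\infty A_k(n)\sqrt{k}\,\frac{d}{dn}\!\left(\frac{\sinh((x/k)\lambda_n)}{\lambda_n}\right), \qquad \lambda_n := \sqrt{n-1/24},\ x := \pi\sqrt{2/3}, \]
with Kloosterman-like sums $A_k(n)$ of at most polynomial growth in $k$; the $k\gqs 2$ tail is $\bo{e^{x\sqrt{n}/2}}$, exponentially smaller than any target of size $e^{x\sqrt{n}}/n^{R/2+1}$, so only the $k{=}1$ term matters.

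Differentiating and writing $\sinh$, $\cosh$ in exponential form yields
\[ \frac{d}{dn}\!\left(\frac{\sinh(x\lambda_n)}{\lambda_n}\right) = \frac{x\lambda_n\cosh(x\lambda_n) - \sinh(x\lambda_n)}{2\lambda_n^3} = \frac{e^{x\lambda_n}(x - 1/\lambda_n)}{4\lambda_n^2} + \bo{e^{-x\sqrt{n}}}. \]
Since $x\sqrt{3} = \pi\sqrt{2}$, factoring out $e^{x\sqrt{n}}/(4\sqrt{3}n)$ reduces the task to expanding
\[ \left(\frac{n}{\lambda_n^2} - \frac{n}{x\lambda_n^3}\right) e^{x(\lambda_n - \sqrt{n})} \]
as an asymptotic series in $n^{-1/2}$ and identifying its coefficients with the $\hr_r$.

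The algebraic factor splits naturally: $n/\lambda_n^2 = (1-1/(24n))^{-1}$ contributes only even powers $24^{-k}n^{-k}$, matching $\alpha_j(x) = 24^{-j/2}$ for $j$ even, while $n/(x\lambda_n^3) = x^{-1}n^{-1/2}(1-1/(24n))^{-3/2}$ contributes only odd powers, and the identity $\binom{-3/2}{k}(-1)^k = \binom{k+1/2}{k}$ with $j := 2k+1$ matches $\alpha_j(x) = -24^{(1-j)/2}x^{-1}\binom{j/2}{(j-1)/2}$ for $j$ odd. Thus this factor equals $\sum_{j\gqs 0}\alpha_j(x)n^{-j/2}$. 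For the exponential factor, write $x(\lambda_n - \sqrt{n}) = x\sqrt{n}\sum_{k\gqs 1}\binom{1/2}{k}(-1/(24n))^k$ and apply \e{cox} with outer parameter $\alpha = x\sqrt{n}$, inner variable $y = 1/(24n)$, and inner coefficients $a_k = (-1)^k\binom{1/2}{k}$. The homogeneity \e{gsb3} with $c = -1$ converts $\dm_{m,k}(a_1,a_2,\dots)$ into $(-1)^m\dm_{m,k}(\binom{1/2}{1},\binom{1/2}{2},\dots)$; reindexing via $\ell := 2m - k$ (so that $0\lqs k\lqs m$ becomes $\ell/2\lqs m\lqs \ell$) reproduces exactly $\sum_{\ell\gqs 0}\beta_\ell(x)n^{-\ell/2}$ in the form \e{mv2}.

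Multiplying the two expansions gives the Cauchy product $\sum_{r\gqs 0}\hr_r n^{-r/2}$ with $\hr_r = \sum_{j=0}^r \alpha_j(x)\beta_{r-j}(x)$; in particular $\hr_0 = \alpha_0(x)\beta_0(x) = 1$. Truncating at level $R-1$ produces the stated expansion with $\bo{n^{-R/2}}$ remainder, absorbing the tails of both series together with the exponentially small contributions from the $k\gqs 2$ Rademacher terms and the $e^{-x\lambda_n}$ piece. The main obstacle is purely bookkeeping: orchestrating the substitutions $y = 1/(24n)$, $\alpha = x\sqrt{n}$, the sign twist via \e{gsb3}, and the reindexing $k = 2m - \ell$ so as to land precisely on the formula for $\beta_\ell(x)$, together with uniform-in-$R$ tail estimates for the binomial and exponential expansions to make the $\bo{n^{-R/2}}$ error explicit.
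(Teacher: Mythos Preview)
Your argument is correct and follows essentially the same route as the paper. You begin one step earlier, deriving the leading approximation from the $k=1$ term of Rademacher's series in its $d/dn$ form, whereas the paper simply quotes Rademacher's already-simplified first term $p(n)=\frac{1}{4\sqrt{3}n\kappa_n}\bigl(1-\tfrac{1}{x\sqrt{n\kappa_n}}\bigr)e^{x\sqrt{n\kappa_n}}\bigl(1+O(e^{-x\sqrt{n}/2})\bigr)$ with $\kappa_n=1-1/(24n)$; after that, both proofs split off the same algebraic factor (yielding the $\alpha_j$) and the same exponential factor $e^{x\sqrt{n}(\sqrt{\kappa_n}-1)}$ (yielding the $\beta_\ell$ via \e{cox} and the sign twist \e{gsb3}), and multiply.
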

\begin{proof}
Using the first term in his expansion, Rademacher shows
\begin{equation}\label{hard}
  p(n)=\frac{1}{4\sqrt{3}n  \cdot \kappa_n} \left(1-\frac{1}{x \sqrt{n  \cdot \kappa_n}}\right) e^{x \sqrt{n \cdot \kappa_n}} \left( 1+O(e^{-x \sqrt{n}/2}) \right)
\end{equation}
for $\kappa_n :=1-1/(24n)$ in \cite[p. 278]{Ra}. Put $z:=-1/(24n)$ and write this main term as
\begin{equation*}
  \frac{e^{x \sqrt{n}}}{4\sqrt{3}n} \cdot
\frac{1}{1+z}\left(1-\frac{1}{x \sqrt{n  (1+z)}}\right) \cdot
e^{x \sqrt{n}(\sqrt{1+z} -1)}.
\end{equation*}
Treating $z$ as an independent complex variable, the middle factors are analytic for $|z|<1$ and have a Taylor expansion. Using the usual bounds on the remainder shows
\begin{equation*}
\frac{1}{1+z}\left(1-\frac{1}{x \sqrt{n  (1+z)}}\right)
=\sum_{j=0}^{J-1} a_j(\sqrt{n}) \cdot z^j +O(|z|^J) \qquad (|z|\lqs 1/2).
\end{equation*}
By the binomial theorem we find $a_j(\sqrt{n}) = (-1)^j-\binom{-3/2}{j}/(x\sqrt{n})$ and hence, with \e{mv},
\begin{equation} \label{kas}
  \frac{1}{\kappa_n} \left(1-\frac{1}{x \sqrt{n  \cdot \kappa_n}}\right) = \sum_{j=0}^{J-1} \alpha_j(x) \cdot n^{-j/2} +O( n^{-J/2}).
\end{equation}

Similarly to \e{kas},
\begin{equation*}
e^{x \sqrt{n}(\sqrt{1+z} -1)}
=\sum_{\ell=0}^{L-1} b_\ell(\sqrt{n}) \cdot z^\ell +O(|z|^L) \qquad (|z|\lqs 1/\sqrt{n}).
\end{equation*}
Then  the expansions
\begin{align*}
   e^{x \sqrt{n}(\sqrt{1+z} -1)} & =  \exp\biggl( x\sqrt{n} \sum_{j=1}^\infty \binom{1/2}{j} z^j \biggr)\\
   & =  \sum_{k=0}^\infty \frac{(x \sqrt{n})^k}{k!} \sum_{m=k}^\infty \dm_{m,k}\left( \binom{1/2}{1},\binom{1/2}{2},\binom{1/2}{3}, \dots \right) z^m
\end{align*}
imply that
\begin{equation*}
  b_\ell(\sqrt{n}) = \sum_{k=0}^\ell \frac{(x \sqrt{n})^k}{k!}  \dm_{\ell,k}\left( \binom{1/2}{1},\binom{1/2}{2},\binom{1/2}{3}, \dots \right).
\end{equation*}
After rearranging we finally obtain, with \e{mv2},
\begin{equation} \label{kas2}
  e^{x \sqrt{n}(\sqrt{\kappa_n} -1)} = \sum_{\ell=0}^{L-1} \beta_\ell(x) \cdot n^{-\ell/2} + O(n^{-L/2}).
\end{equation}
Use \e{kas} and \e{kas2} in \e{hard} to complete the proof.
\end{proof}
For example,
\begin{equation*}
  \hr_1=-\frac{72+\pi ^2}{24 \sqrt{6} \pi } \approx -0.443288, \qquad \hr_2=\frac{432+\pi ^2}{6912} \approx  0.0639279.
\end{equation*}
We see from \e{mv}, \e{mv2} that $C_r$ is $1/\pi$ times a polynomial in $\pi$ of degree $r+1$,  and so we cannot expect much simplification. Of course \e{hard} is more accurate than \e{less}, and including more terms of Rademacher's series is better still. The numbers $p(n)$ are examples of Fourier coefficients of weakly holomorphic modular forms and Rademacher type series exist for all of these.  In the next examples though, and many other situations, expansions of the form \e{less} are the best available.

\subsection{Laplace's method} \label{lap}
A  simple example of  Laplace's method, (and where it overlaps with the saddle-point method), is the following. Suppose that $f(z)$ and $g(z)$ are holomorphic functions on a domain containing the interval $[-1,1]$ and  real-valued on this interval. The integral
\begin{equation} \label{i(n)}
   I(n):= \int_{-1}^{1} e^{n \cdot f(z)} g(z) \, dz
\end{equation}
can be accurately  estimated for large $n \in \R$ if $f'(z)$ has a simple zero at $z=0$ (called the saddle-point) and $f(z)<f(0)$ for all non-zero $z\in [-1,1]$.
In that case, the integral becomes highly concentrated at $0$ and as $n \to \infty$,
\begin{equation}\label{perr}
 I(n) = e^{n \cdot f(0)} \left(\sum_{r=0}^{R-1}   \frac{\G(r+1/2) \cdot \Psi_{2r}(f,g)}{n^{r+1/2}} + O\left(\frac{1}{n^{R+1/2}} \right)  \right)
\end{equation}
for certain constants $\Psi_{2r}(f,g)$. If we write the expansions of $f$ and $g$ at $0$ as
\begin{equation*}
  f(z)-f(0)=-\sum_{m=0}^\infty a_m z^{m+2}, \qquad g(z) = \sum_{m=0}^\infty b_m z^{m}
\end{equation*}
then we have
\begin{equation}\label{perr2}
  \Psi_{s}(f,g) = a_0^{-(s+1)/2} \sum_{m=0}^s b_{s-m} \sum_{k=0}^m \binom{-(s+1)/2}{k} \dm_{m,k}\left( \frac{a_1}{a_0},\frac{a_2}{a_0}, \dots \right).
\end{equation}
Laplace found the main term of \e{perr} with $r=0$ and $\Psi_{0}(f,g)=a_0^{-1/2} b_0$ in the 18th century. Perron in \cite{Pe17} proved \e{perr} rigorously, giving a formula for the constants $\Psi_{2r}(f,g)$ in terms of a derivative.   It is then easy to obtain \e{perr2} with \e{pot}. See \cite{Nem13} for a discussion of this, and the weaker conditions on $f$ and $g$ that are possible in Laplace's method. The general forms of \e{perr} and \e{perr2} in Perron's saddle-point method are also described in Corollary 5.1 and Proposition 7.2  of \cite{OSper}.


For an important application to the gamma function write
\begin{equation*}
  \G(n+1)= \int_0^\infty e^{-t} t^n \, dt = n^{n+1}\int_{-1}^\infty e^{n(\log(1+z)-1-z)}\, dz.
\end{equation*}

\begin{cor}[Stirling's approximation] \label{stirgam}
As real $n \to \infty$
\begin{equation} \label{gmx}
  \G(n+1)= \sqrt{2\pi n}\left(\frac{n}{e}\right)^n \left(1+\frac{\g_1}{n}+\frac{\g_2}{n^2}+ \cdots + \frac{\g_{k}}{n^{k}} +O\left(\frac{1}{n^{k+1}}\right)\right).
\end{equation}
\end{cor}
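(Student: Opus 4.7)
The plan is to apply the Laplace-method result recorded at the start of subsection \ref{lap} to the integral representation
\begin{equation*}
  \G(n+1) = n^{n+1}\int_{-1}^\infty e^{n(\log(1+z)-1-z)}\, dz
\end{equation*}
already written just before the statement, with $f(z):=\log(1+z)-1-z$ and $g(z):=1$. First I verify the hypotheses: $f$ is real-analytic on $(-1,\infty)$, the derivative $f'(z)=-z/(1+z)$ has a simple zero at $z=0$, and $f(z)-f(0)=\log(1+z)-z<0$ for all nonzero $z$ in the domain, so $z=0$ is the unique saddle-point and $f(0)=-1$.

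Second, I reconcile the half-line $[-1,\infty)$ with the compact interval $[-1,1]$ appearing in \e{i(n)}. Splitting at $z=1$, on $[1,\infty)$ we have $f(z)-f(0)\lqs \log 2-1<0$ with $f(z)\to-\infty$ linearly in $z$, so $\int_1^\infty e^{n f(z)}\,dz = O(e^{n(f(0)-\delta)})$ for some fixed $\delta>0$. This is exponentially smaller than every term in \e{perr} and is absorbed into the error, so \e{perr}--\e{perr2} apply verbatim.

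Third, I compute the coefficients. Since $g\equiv 1$ we have $b_0=1$ and $b_m=0$ for $m\gqs 1$, so only the $m=s$ term contributes in \e{perr2}. Expanding $\log(1+z)-z=-\sum_{m\gqs 2}\frac{(-1)^m}{m}z^m$ and matching with $-\sum_{m\gqs 0}a_m z^{m+2}$ gives $a_m=(-1)^m/(m+2)$; in particular $a_0=1/2$ and $a_m/a_0=2(-1)^m/(m+2)$. Hence
\begin{equation*}
  \Psi_{2r}(f,1) = 2^{r+1/2}\sum_{k=0}^{2r}\binom{-r-1/2}{k}\dm_{2r,k}\left(-\tfrac{2}{3},\tfrac{1}{2},-\tfrac{2}{5},\dots\right).
\end{equation*}
Inserting this into \e{perr} together with $\G(n+1)=n^{n+1}I(n)$, and using $\G(1/2)=\sqrt{\pi}$ with $\Psi_0(f,1)=\sqrt{2}$, the leading factor becomes $n^{n+1/2}e^{-n}\sqrt{2\pi}=\sqrt{2\pi n}(n/e)^n$, and the remaining terms yield
\begin{equation*}
  \g_k = \frac{\G(k+1/2)\,\Psi_{2k}(f,1)}{\sqrt{2\pi}}.
\end{equation*}

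The one substantive obstacle is the tail truncation at $z=1$, which is not covered by the statement of \e{perr} on $[-1,1]$; but the exponential decay bound above disposes of it. Everything else is a direct specialization of the machinery of subsection \ref{lap} to the particular $f$ and $g$ coming from $\G(n+1)$.
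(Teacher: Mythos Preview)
Your argument is essentially the paper's own: apply the Perron/Laplace expansion \e{perr}--\e{perr2} to $f(z)=\log(1+z)-1-z$, $g\equiv 1$, and simplify; your coefficient formula reduces (via \e{gsb2}, \e{gsb3} and $\G(r+1/2)=(2r-1)!!\sqrt{\pi}/2^r$) exactly to the paper's \e{stx}. One small point: you correctly truncate the tail at $+\infty$, but the left endpoint $z=-1$ also violates the holomorphicity hypothesis of \e{i(n)} since $\log(1+z)$ blows up there, so strictly you should excise a neighborhood of $-1$ as well (the contribution on $[-1,-1+\epsilon]$ is $O(\epsilon^n)$, hence negligible for the same reason).
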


From \e{perr2} after simplifying, we obtain the coefficients
\begin{equation} \label{stx}
  \g_m =  \sum_{j=0}^{2m}  \frac{(2m+2j-1)!!}{(-1)^j j!} \dm_{2m,j}\left(\frac{1}{3},\frac{1}{4},\frac{1}{5}, \cdots \right),
\end{equation}
as in  \cite[Sect. 5]{Pe17}, \cite[Sect. 8.1]{OSper}, where $(2k-1)!!$ means the product of the odd numbers less than $2k$ and equals $(2k)!/(2^k k!)$ for $k \gqs 0$.
Stirling's series for $\log \G(n)$, see  \cite[Eq. (1.4.5)]{AAR}, is
\begin{equation} \label{stid}
\log \G(n+1) = \log\left( \sqrt{2\pi n}\left(\frac{n}{e}\right)^n\right) +
\frac{B_2}{2\cdot 1\cdot  n}+\frac{B_4}{4\cdot 3\cdot n^3}+ \cdots + \frac{B_{2k}}{2k(2k-1)n^{2k-1}}+\bo{\frac{1}{n^{2k+1}}},
\end{equation}
and may be exponentiated to produce another expression for $\g_m$:
\begin{equation} \label{stx2}
  \g_m =  \sum_{j=0}^{m}  \frac{1}{j!} \dm_{m,j}\left(\frac{B_2}{2\cdot 1},0,\frac{B_4}{4\cdot 3},0, \cdots \right).
\end{equation}
Replacing the Bernoulli numbers in \e{stx2} with Riemann zeta  values, \cite[Eq. (1.3.4)]{AAR}, gives the equivalent equality
\begin{equation} \label{stx3}
  \g_m =  \sum_{j=0}^{m}  \frac{1}{j!} \dm_{m,j}\left(\frac{\zeta(-1)}{-1},\frac{\zeta(-2)}{-2},\frac{\zeta(-3)}{-3}, \cdots \right),
\end{equation}
and this has a pleasing symmetry with
\begin{equation}\label{zet}
  \G(z+1) = \sum_{m=0}^\infty \left[\sum_{j=0}^m \frac{1}{j!} \dm_{m,j}\left(\g,\frac{\zeta(2)}{2},\frac{\zeta(3)}{3}, \cdots \right) \right] (-z)^m,
\end{equation}
where $\g$ is Euler's constant. We have  convergence in \e{zet} for $|z|<1$ and it is proved by taking the log of the Weierstrass product for $1/\G(z)$, expanding this into a series, and then exponentiating. Further formulas for $\g_m$ appear in \cite[p. 267]{Comtet} and \cite[Sect. 3]{Nem13}.
See \cite{Gel}, \cite[Sect. 5.3]{ScIv} for the history of Stirling's series. The common form \e{stid} we use today, involving Bernoulli numbers, is due to De Moivre who simplified Stirling's treatment.

The reciprocal of the gamma function has a similar asymptotic expansion:
\begin{equation} \label{gmx2}
  \frac 1{\G(n+1)}= \frac 1{\sqrt{2\pi n}}\left(\frac{e}{n}\right)^n \left(1-\frac{\g_1}{n}+\frac{\g_2}{n^2}- \cdots + (-1)^{k}\frac{\g_{k}}{n^{k}} +O\left(\frac{1}{n^{k+1}}\right)\right),
\end{equation}
and the fact that the same coefficients appear is a consequence of \e{stid} containing only odd powers.
Of course, since $\G(n+1) = n!$ for $n\in \Z_{\gqs 0}$, the results \e{gmx} and \e{gmx2} may be used to estimate binomial coefficients or other factorial expressions as their entries go to infinity.
See \cite{Boyd} for a generalization of Corollary \ref{stirgam} with $n$ replaced by $z \in \C$ and $|z| \to \infty$ with $|\arg z|<\pi/2$.

\subsection{A further example}
The integral
\begin{equation} \label{simpl}
  I_\alpha(n):=\int_1^\infty  (\log z)^n e^{-\alpha z}   \, dz \qquad(\alpha>0),
\end{equation}
requires more involved methods than \e{i(n)} to find its behavior as $n \to \infty$.
Since $\log \log z$ does not have a local maximum, we look for a saddle-point $z_0$ for the whole integrand. To locate it, recall the Lambert $W$ function.  For $x\gqs 0$ it may be defined  as the inverse to $x \mapsto xe^x$  so that
$W(xe^x)=x$. Then $W(x)$ is non-negative and increasing
for $x\gqs 0$, satisfying $W(x)\lqs \log x$  when $x\gqs e$.
An easy calculation finds
$$
z_0=e^{W(n/\alpha)}= \frac{n/\alpha}{W(n/\alpha)}.
$$
In this case we may still use Laplace's method according to the general procedures in \cite[Sect. B6]{Fl09}. Part of the series \e{hot} has to be exponentiated and the polynomials $\dm_{n,k}$  keep track of all the components.
Recalling  \e{lnu},  define the rational functions
\begin{equation*}
  a_r(v)  :=\sum_{j=0}^{2r} \frac{(2r+2j-1)!!}{j!} \left( \frac{v^2}{v+1}\right)^{j+r}
   \dm_{2r,j}(\ell_3(v), \ell_4(v), \dots).
\end{equation*}
The next result is proved in \cite[Sect. 4]{OSxi}.

\begin{theorem}  \label{ian}
Suppose $\alpha>0$ and set $u :=W(n/\alpha)$. Then as $n \to \infty$ we have
\begin{equation} \label{maini}
 I_{\alpha}(n) =  \frac{\sqrt{2\pi} u}{\sqrt{(1+u)n}} \left(\frac{u}{e^{1/u}}\right)^n \left( 1+  \sum_{r=1}^{R-1}\frac{a_r(u)}{n^r}+  O\left( \frac{(\log n)^R}{n^R}\right) \right)
\end{equation}
where the implied constant  depends only on $R \gqs 1$  and $\alpha$. Also $a_r(u)  \ll (\log n)^r$.
\end{theorem}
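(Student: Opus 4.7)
The plan is to apply Laplace's method at the saddle of $\phi(z):=n\log\log z-\alpha z$. Since $\phi'(z)=n/(z\log z)-\alpha$, the saddle is at $z_0=e^u$, where $u=W(n/\alpha)$ and $\alpha e^u=n/u$. The value $\phi(z_0)=n\log u-n/u$ matches the factor $(u/e^{1/u})^n$ appearing in the theorem.

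\textbf{Main substitution.} I would then change variables via $z=e^u(1+x)$, so that $dz=e^u\,dx$, $\log z=u+\log(1+x)$, and $\alpha z=(n/u)(1+x)$, giving
$$I_\alpha(n)=e^u u^n e^{-n/u}\int_{e^{-u}-1}^\infty\exp\Bigl(n\log(1+\log(1+x)/u)-nx/u\Bigr)\,dx.$$
The virtue of this choice is that, by \eqref{hot} and \eqref{lnu}, the exponent equals $n\sum_{k\geq 1}\ell_k(u)x^k-nx/u$, and the cancellation $\ell_1(u)=1/u$ removes the linear term, reducing the phase to $n\sum_{k\geq 2}\ell_k(u)x^k$. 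Since $\ell_2(u)=-(u+1)/(2u^2)$, the quadratic coefficient is $-n(u+1)/(2u^2)$, so the Gaussian width is $\sigma:=u\sqrt{2/((u+1)n)}$. Thus the \emph{same} polynomials $\ell_k(u)$ that appear in the definition of $a_r(u)$ arise organically from this substitution.

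\textbf{Matching $a_r(u)$.} Rescaling $x=\sigma t$ turns the quadratic term into $-t^2$, and the error from replacing $\int_{e^{-u}-1}^\infty$ by $\int_{-\infty}^\infty$ is exponentially small by concavity of the phase. Expanding $\exp\bigl(\sum_{k\geq 3}n\ell_k(u)\sigma^k t^k\bigr)$ via Proposition \ref{comp} applied to $g(y)=e^y$ produces a double sum whose typical term is $(1/m!)\,\dm_{N,m}(n\sigma^3\ell_3(u),n\sigma^4\ell_4(u),\dots)\,t^{N+2m}$. The homogeneity relations \eqref{gsb2} and \eqref{gsb3} let one extract the $n$- and $\sigma$-scalings as $(n\sigma^2)^m\sigma^N\dm_{N,m}(\ell_3(u),\ell_4(u),\dots)$, and since $n\sigma^2=2u^2/(u+1)$ the powers of $n$ collapse cleanly. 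Integrating against $e^{-t^2}$ kills odd $N$ and, for $N=2r$, returns $(2r+2m-1)!!\sqrt{\pi}/2^{r+m}$. Collecting these factors with $j=m$ reproduces exactly the stated formula for $a_r(u)$ at order $n^{-r}$, and the overall prefactor emerges after simplifying using $\alpha e^u=n/u$.

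\textbf{Uniformity and remainder.} The main obstacle is that $u=W(n/\alpha)\sim\log n$ grows with $n$, so one must track the error \emph{uniformly} in the moving parameter. I would split the integral at $|x|\leq n^{-1/3}$: on the tails, convexity of the phase forces exponential decay; on the central window, one truncates the Taylor series after the $R$-th correction and bounds the remainder using $|\ell_k(u)|\ll(k-1)!/(ku)$ from \eqref{lnu} together with standard Gaussian moment estimates. This produces the claimed $O((\log n)^R/n^R)$, with the $(\log n)^R$ factor arising from the powers of $u\ll\log n$ in $a_R(u)$. The same bounds, via the homogeneity of $\dm_{2r,j}$ in its arguments, also yield the stated growth $a_r(u)\ll(\log n)^r$.
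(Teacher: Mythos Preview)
The paper does not actually contain a proof of this theorem; immediately before the statement it says ``The next result is proved in \cite[Sect.~4]{OSxi}'' and gives only a sketch: Laplace's method at the saddle $z_0=e^{W(n/\alpha)}$, with the phase after recentering controlled by the series \eqref{hot}, whose exponentiation is organized by the De Moivre polynomials. Your proposal carries out exactly this outlined program --- the substitution $z=e^u(1+x)$ makes the exponent $n\log(1+\log(1+x)/u)-nx/u$, the $\ell_k(u)$ of \eqref{lnu} appear, the linear term cancels, and expanding the exponential of the tail via \eqref{gsb2}--\eqref{gsb3} reproduces the stated $a_r(u)$ --- so your approach coincides with the one the paper indicates.
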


 A more general result allows suitable functions $f(z)$ to be included in the integrand in \e{simpl} and this is used to prove the asymptotic expansion of the $n$th Taylor coefficient of a normalized version of the Riemann zeta function $\zeta(s)$ at the central symmetric point $s=1/2$ as $n\to \infty$. See \cite[Thm. 1.5]{OSxi}, giving an explicit version of  \cite[Thm. 9]{GORZ} by expressing the expansion coefficients   in terms of De Moivre polynomials.

{\small \bibliography{bell-bib} }

{\small 
\vskip 5mm
\noindent
\textsc{Dept. of Math, The CUNY Graduate Center, 365 Fifth Avenue, New York, NY 10016-4309, U.S.A.}

\noindent
{\em E-mail address:} \texttt{cosullivan@gc.cuny.edu}
}

\end{document}